\documentclass[letterpaper,10pt,conference]{ieeeconf}  

\IEEEoverridecommandlockouts                              
\usepackage{color}
\usepackage{amsmath}
\usepackage{amssymb}

\newtheorem{assumption}{Assumption}
\newtheorem{theorem}{Theorem}

\newtheorem{lemma}{Lemma}
\newtheorem{remark}{Remark}
\newtheorem{definition}{Definition}

\def\EE{\mathsf{E}}
\def\PP{\mathsf{P}}

\let\cal\mathcal

\newcommand{\spn}{{\rm spn}}

\title{\LARGE \bf
Threshold Structure of Optimal Policies in Restart POMDPs
}

\author{Konstantin Avrachenkov, Alexey Piunovskiy and Yi Zhang
\thanks{This work is supported by a grant from NSP SmartProfile and has been accepted to IEEE CDC 2026.}
\thanks{K. Avrachenkov is with Inria, France
        {\it k.avrachenkov@inria.fr}}%
\thanks{A. Piunovskiy is with Department of Mathematical Sciences, University of Liverpool, UK
        {\it piunov@liverpool.ac.uk}}%
\thanks{Y. Zhang is with School of Mathematics, University of Birmingham, UK
        {\it y.zhang.29@bham.ac.uk}}%
}

\begin{document}

\maketitle
\thispagestyle{empty}
\pagestyle{empty}

\begin{abstract}
We study a Restart POMDP (Partially Observable Markov Decision Process) on a general Borel state space, where the controller either lets the hidden state evolve unobserved or restarts the system and observes the new state. Exploiting a sufficient-statistic representation consisting of the last observed state and the elapsed time since restart, we reduce the problem to a fully observed MDP. Under a natural one-step cost deterioration condition, we prove that optimal policies have a threshold structure in the elapsed time for both the discounted and total undiscounted cost criteria. When the state space is partially ordered and the kernel is stochastically monotone, we further show that the optimal threshold is nonincreasing in the state. For the average cost criterion, under additional assumptions of geometric ergodicity and domination of the transient gain, we establish analogous threshold results via the vanishing discount approach, after showing the uniform boundedness of the optimal thresholds and relative value functions.
\end{abstract}

\section{Introduction}


Many controlled systems evolve for long periods without direct observation, while costly interventions such as inspection, reset, repair, or replacement reveal the current state and improve system and information quality. This situation arises in applications including machine maintenance and replacement \cite{BP96,GMA05,W79}, healthcare \cite{Metal20}, sensing/radars \cite{V16}, telecommunications \cite{JS12,LWZ11,NM08}, age of information \cite{H18,TM24} and web crawling \cite{AB16}. Motivated by such problems, we study a Restart POMDP with the general Borel state space and two actions: {\it no intervention}, under which the hidden state evolves according to a Markov kernel and remains unobserved, or {\it restart}, under which the system is reset according to a given distribution and the state is observed. The key tradeoff is between delaying restart to avoid intervention costs and restarting early to improve both the hidden state and the controller’s information. We investigate discounted, total undiscounted as well as average cost optimality and the structure of optimal policies.


As is standard in the POMDP literature, one may reformulate the problem as a belief-state MDP with complete observation. For the Restart POMDP, this reduction has an especially convenient form. The controller’s information is fully summarized by two sufficient statistics: the last observed state and the time elapsed since the most recent intervention.

Our first contribution is to establish, under a natural one-step cost deterioration condition, that the optimal policy for the discounted and total undiscounted cost criteria has a threshold structure in the elapsed time since the last restart.
In general, the optimal threshold depends on the last observed state of the system. When the underlying Borel state space is equipped with a partial order and the Markov kernel is stochastically monotone, we further show that the optimal threshold is nonincreasing with respect to this order. In the second part of the paper, we establish analogous structural results for the average-cost criterion under assumptions of geometric ergodicity and domination of the transient gain. The latter condition rules out the trivial case in which restarting is never optimal. Our analysis of the average cost problem relies on the vanishing discount approach together with uniform boundedness of the relative value function. Beyond their theoretical interest, these structural results can facilitate efficient numerical schemes and guide the design of structure-aware reinforcement learning algorithms by reducing the policy search space.

Let us review the related literature. Reference \cite{D63} was among the first to study an MDP model with restart for optimal machine replacement policies. In \cite{D63}, the state of a machine is observed at periodic epochs. Then, in \cite{W79}, noise-corrupted observations of the hidden system state were introduced into the model of \cite{D63}. Both \cite{D63} and \cite{W79} established monotonicity properties of optimal threshold policies for finite-state models, under suitable monotonicity assumptions on the underlying Markov kernel. Then, \cite{KK10} studied the effect of a non-stationary environment, also in the finite-state setting. There is a significant body of related work on Restless Multi-Armed Bandits with an active action that restarts the arm from a given distribution \cite{AM22,AM24,AB16,H18,JS12,LWZ11,Metal20,TM24,V16}. Most of these works, except \cite{AB16,H18,TM24}, assume that the arms evolve in finite-state spaces. In \cite{AB16,H18,TM24}, the state space is countable, but the state evolution is deterministic and has very particular forms.
Markov processes with restart in general Borel spaces and without control have been studied in \cite{APZ13} in continuous time and in \cite{APZ18} in discrete time, respectively. See \cite{EMS20} for a comprehensive review of Markov processes with restart without control. The current work provides a significant generalization of the structural results for optimal policies in Restart POMDP from finite-state spaces to general Borel state spaces. We note that the structure of the belief space in this work is very similar to that of the belief state in Partially Observable Restless Bandits \cite{AM24} and Intermittently Observable MDPs \cite{CL25,ADK25}. Finally, in the average cost setting, we establish the uniform boundedness of the optimal threshold, creating a connection to recent research on finite-memory feedback policies \cite{KY22}.

\section{Restart POMDP}

Let us consider the POMDP model $\{{\bf X},{\bf O}, {\bf A}, \hat{P},\hat{O},\hat{O}_0, l\},$  
where ${\bf X}$ is the Borel state space, endowed with the Borel $\sigma$-algebra ${\cal B}({\bf X})$,
${\bf O}:={\bf X}\cup\{*\}$ is the observation space, where the isolated point $*$ represents that there is no information about the current hidden state, ${\bf A}=\{n,r\} $ is the action space, where $n$ (or $r$) represents the action of no intervention (resp., restart), 
$\hat{P}$ is a stochastic kernel on ${\bf X}$ given ${\bf X}\times {\bf A}$ in the form 
\[\hat{P}(dx'|x,a)=\begin{cases}
P(dx'|x) &\mbox{ if $a=n$};\\
\nu(dx') & \mbox{ if $a=r$,}
\end{cases}
\]
where $P$ is a stochastic kernel on ${\bf X}$ given $ {\bf X}$ and $\nu$ is a given distribution on ${\bf X}$, $\hat{O}$ is the observation kernel, a stochastic kernel on ${\bf O}$ given ${\bf A}\times {\bf X}$ defined by 
\[
\hat{O}(do|a,x)=\begin{cases}
\delta_{*}(do) &\mbox{ if $a=n$;}\\
\delta_x(do) &\mbox{ if $a=r$},
\end{cases}
\]
where $\delta_x$ is the Dirac measure at $x$, 
$\hat{O}_0$, defined for each $x\in {\bf X}$ by  $\hat{O}_0(do|x)=\delta_x(do)$, is the observation kernel at the initial time $0$ given by the stochastic kernel on ${\bf O}$ given ${\bf X}$, and $l$ is the one-step cost function in the following form: 
$$l(x,a)=F(x)+G(a)$$ for a $[0,\infty]$-valued measurable function $F$ on ${\bf X} $, and $G(a)\in[0,\infty)$ for each $a\in {\bf A}$.  Cf. \cite{Feinberg:2012}. Typically, $G(n) < G(r)$, but this is not assumed in this paper unless stated otherwise.

In words, at initial time $t=0$, the distribution $\mu$ of the hidden state $X_0$ is given. Given $X_0=x_0$, the initial observation $O_0$ obeys $\hat{O}_0(do|x_0)$. Given $O_0=o_0$, an action $a_0$ is selected based on $(\mu,o_0)$. An action may be one of the following:
\begin{itemize}
\item $n$ (\emph{no intervention}): the hidden state evolves according to the Markov kernel $P$ 
and remains unobserved;
\item $r$ (\emph{restart}): the hidden state is replaced by an independent $Z\sim \nu$ and $Z$ is observed.
\end{itemize}
If at time $t\ge 0$, the hidden state is $X_t=x_t,$ an action $a_t$ is selected based on the observed history $(\mu,o_0,a_0,o_1,a_1,\dots,o_t)$, then $X_{t+1}$ obeys the distribution $\hat{P}(dx'|x_t,a_t)$, and given $X_{t+1}=x_{t+1}$, $O_{t+1}$ obeys the distribution  $\hat{O}(do|a_t,x_{t+1}).$  

When the hidden state is $x$ and an action $a$ is selected at one step, the one-step cost $l(x,a)$
is incurred.

 A policy in this POMDP is given by a sequence $\hat{\pi}=\{\hat{\pi}_t\}_{t=0}^\infty$ of stochastic kernels $\hat{\pi}_t$ on ${\bf A}$ given the observed histories $(\mu,o_0,a_0,o_1,a_1,\dots,o_t).$ For the fixed initial distribution $\mu$ of $X_0$ and a policy $\hat{\pi}$, let $\mathbb{P}_\mu^{\hat{\pi}}$ be the probability (strategic) measure on the canonical sample space of the process $\{(X_t,O_t,A_t)\}_{t=0}^\infty$ in the POMDP model, and $\mathbb{E}_\mu^{\hat{\pi}}$ the corresponding expectation, see \cite[Chapter 4]{O89} or \cite{Feinberg:2012} for the detail.  We are interested in the problem of minimizing the $\gamma$-discounted ($\gamma\in(0,1]$) cost
$\mathbb{E}_\mu^{\hat{\pi}}\Big[\sum_{t=0}^\infty \gamma^t l(X_t,A_t) \Big]$
and the problem of minimizing the long-run average cost
$\limsup_{T\rightarrow \infty}\frac{1}{T}\mathbb{E}_\mu^{\hat{\pi}}\Big[\sum_{t=0}^{T-1} l(X_t,A_t) \Big].$ 
Both problems can be reduced to their counterparts in a (completely observed) MDP, following the general method described in e.g., \cite{Feinberg:2012} or \cite[Chapter 4]{O89}. In general, the state space in the induced belief-state MDP is the space of  (posterior) distributions.  However, the concerned POMDP in this paper can be reduced to an MDP with a simpler state space, as described in the next section.



\section{Reduction to the Belief MDP}

Because the state is observed \emph{precisely and only} at restart times (the initial observation is regarded as a post-restart observation), 
the information available to the controller between restarts
is fully summarized by:
\begin{enumerate}
\item the last observed post-restart state $x\in {\bf X}$;
\item the elapsed time $k\in\mathbb N_0:=\{0,1,2,\dots\}$ since that restart.
\end{enumerate}
Conditioned on $(x,k)$, the belief about the current hidden state is equal to $\delta_x P^k{(dy):=P^k(dy\mid x)}$, {where $P^k$ is the $k$th composition of the Markov kernel $P$ on ${\bf X}$ given ${\bf X}$.} 

Then, define the (fully observed) {MDP model $\{{\bf S},{\bf A},Q,c\}$ with the state space
${\bf S} := {\bf X} \times \mathbb N_0$, the action space ${\bf A}=\{n,r\}$, the transition kernel $Q$ on ${\bf S}$ given ${\bf S}\times {\bf A}$ defined for each  $(x,k)\in {\bf S}$ and $a\in {\bf A}$ by
\begin{align*}Q(dx'\times dk'|(x,k),a)=
\begin{cases}
 \delta_x(dx')\delta_{k+1}(dk')& \mbox{ if $a=n$;}\\
\nu(dx')\delta_0(dk')& \mbox{ if $a=r$,}\end{cases}
\end{align*} 
and the one-step cost function {$c$} defined for each $(x,k)\in {\bf S}$ and $a\in {\bf A}$ by
\begin{equation}
\label{eq:stage_cost_xk}
c(x,k;a)= (P^kF)(x) + G(a),
\end{equation} where $(P^k F)(x):=\int_{\bf X}F(y)P^k(dy\mid x)$ for all $x\in{\bf X}.$}

{One can fully describe $Q$ as follows:}\\
\centerline{$(x,k) \xrightarrow{n} \ (x,k+1)\quad\text{(deterministically),}$}\\
\centerline{$(x,k) \xrightarrow{r}  \ (Z,0),\ \ Z\sim \nu.$}\\
{Consider the MDP model $\{{\bf X},{\bf A},Q,c\}$ described above.
Let $S_t=(Y_t,K_t)$ and $A_t$ be the state and action variable, respectively, of the MDP at time $t$. The bivariate process $\{(S_t,A_t)\}_{t=0}^\infty$ is called the MDP. A policy $\pi=\{\pi_t\}_{t=0}^\infty$ is a sequence of measurable transition kernels $\pi_{t}$ on ${\bf A}$ given ${\bf H}_{t}:=({\bf S}\times {\bf A})^{t}\times {\bf S}$.
A policy $\pi$ is called deterministic stationary if there is a measurable mapping $\varphi$ from ${\bf S}$ to ${\bf A}$ such that $\pi_t(da|s_0,a_0,\dots,s_t)=\delta_{\varphi(s_t)}(da)$ for all $t\ge 0.$ We often identify such a deterministic stationary policy by the underlying measurable mapping $\varphi$. Let $\PP_{x,k}^\pi$ be the strategic measure of $\pi$ with the initial state $(x,k)\in {\bf S}$, and $\EE_{x,k}^\pi$ be the expectation with respect to $\PP_{x,k}^\pi.$ For the detail, see e.g., \cite{O89}.} 

{
We consider the $\gamma$-discounted cost:
\begin{eqnarray*}
V^\pi_\gamma(x,k) & := & \EE_{x,k}^\pi\Big[\sum_{t\ge 0} \gamma^t c(S_t,A_t)\Big]\\
 & = & \EE_{x,k}^\pi\Big[\sum_{t\ge 0} \gamma^t \left\{(P^{K_t}F)(Y_t) + G(A_t)\right\} \Big]
\end{eqnarray*}
for each $\gamma\in(0,1]$, in particular, $\gamma=1$ corresponds to the total undiscounted cost;
and the long-run average cost: 
\begin{align*}
J^\pi(x,k):= \limsup_{N\rightarrow \infty}\frac{1}{N}\EE_{x,k}^\pi\Big[\sum_{t=0}^{N-1}  c(S_t,A_t)\Big].
\end{align*}}

{A policy $\pi^\ast$ is called $\gamma$-discounted (or average) optimal if $V^{\pi^\ast}_\gamma(x,k)= V_\gamma(x,k):= \inf_{\pi} V^{\pi}_\gamma(x,k)$ (resp., $J^{\pi^\ast}(x,k)=J(x,k):=\inf_{\pi} J^\pi(x,k)$) for each $(x,k)\in {\bf S}$.}

{For both the $\gamma$-discounted and average criteria},  the original partially observed control problem is equivalent 
to {its counterpart in the (completely observable) MDP model $\{{\bf S},{\bf A},Q,c\}$.}

In this paper we show under natural conditions that for the $\gamma$-discounted and average cost criteria,  the MDP model $\{{\bf S},{\bf A},Q,c\}$ admits a deterministic stationary optimal policy that has a threshold structure. The precise meaning of the threshold structure is given as follows.
\begin{definition}
\label{Kostia2026Def01} A deterministic stationary policy $\varphi$ has a threshold structure if for each $x\in {\bf X}$, there is some $k^{\star}(x)\in\{0,1,\dots,\infty\}$ such that 
\begin{align}\label{Kostia2026Eqn11}
\varphi(x,k)=\begin{cases}
r & \mbox{ if $k\ge k^{\star}(x)$; }\\
n & \mbox{ if $k<k^{\star}(x)$.} 
\end{cases}
\end{align} 
\end{definition}

\section{Threshold structure for $\gamma$-discounted cost criterion}

In this section, we show the existence of a $\gamma$-discounted optimal deterministic stationary policy that has a threshold structure in the time elapsed since the most recent restart. Furthermore, when the state space is endowed with a partial order, we establish the monotonicity of the threshold in the state. 


\subsection{Existence of optimal policy and threshold structure}
{We consider the MDP model $\{{\bf S},{\bf A},Q,c\}$ with the $\gamma$-discounted cost criterion, where $\gamma\in(0,1]$ is fixed. Recall that 
\[
V_\gamma(x,k):=\inf_{\pi}\ \EE^\pi_{x,k}\Big[\sum_{t\ge 0}\gamma^t\big((P^{K_t}F)(Y_t)+G(A_t)\big)\Big],
\]
for all $(x,k)\in {\bf S}$, defines the value function for the $\gamma$-discounted MDP problem.} 
 
{According to \cite[Theorem 3.1]{FeinbergORL:2021}, which is applicable to the MDP model $\{{\bf S},{\bf A},Q,c\}$ because ${\bf A}$ is finite,} $V_\gamma$ is the minimal $[0,\infty]$-valued measurable solution to the Bellman equation
\begin{equation}
\label{eq:Bellman_xk}
{V_\gamma}(x,k) = (P^kF)(x) +
\end{equation}
\[
\min\Big\{G(n) + \gamma {V_\gamma}(x,k+1), G(r) + \gamma {\int_{{\bf X}}V_\gamma(z,0)\nu(dz)} \Big\}
\]
{for each $(x,k)\in {\bf S}$}.
For notational convenience, define
\begin{equation}
\label{eq:A_def}
{V_{0,\gamma}} := {\int_{{\bf X}}V_\gamma(z,0)\nu(dz)},
\quad
{A(\gamma)}:= G(r)+\gamma V_{0,\gamma}.
\end{equation}
Then, \eqref{eq:Bellman_xk} becomes
\begin{equation}
\label{eq:Bellman_xk_simplified}
V_\gamma(x,k) = (P^kF)(x) + \min\{ G(n)+\gamma V_\gamma(x,k+1), {A(\gamma)} \}.
\end{equation}

\begin{definition}
{For each $\gamma\in(0,1]$, we call the action $n$ (or $r$) $\gamma$-discounted optimal at $(x,k)\in {\bf S}$ if $G(n)+\gamma V_\gamma(x,k+1)\le {A(\gamma)}$ (resp.,  ${A(\gamma)} \le G(n)+\gamma V_\gamma(x,k+1)$.) In other words, an action $a\in {\bf A}$ is $\gamma$-discounted optimal at $(x,k)\in {\bf S}$ if it attains the minimum in the Bellman equation (\ref{eq:Bellman_xk_simplified}) at $(x,k)$.}
\end{definition} 
{
Note that it is possible that the actions $n$ and $r$ are both $\gamma$-discounted optimal at a given $(x,k)$. A deterministic stationary policy $\varphi$ is $\gamma$-discounted optimal if and only if for each $(x,k)\in {\bf S}$, $\varphi(x,k)$ is $\gamma$-discounted optimal at $(x,k)\in {\bf S}$, and such a policy exists, cf. \cite[Theorem 3.1]{FeinbergORL:2021}. }

\smallskip
\begin{assumption}[One-step cost deterioration]
\label{ass:PFgeF}
The function $F$ and the Markov kernel $P$ satisfy
\begin{equation}
\label{eq:PFgeF}
(PF)(x) \ge F(x), \qquad \forall x\in {\bf X},
\end{equation}
where $(PF)(x):=\int_{\bf X} F(y)\,P(dy \mid x)$.
\end{assumption}
\smallskip

\begin{remark}
Assumption~\ref{ass:PFgeF} means that, for the running cost $F$, the one-step prediction is never better than the
current state in expectation.
This condition is very natural in reliability and machine maintenance \cite{BP96}. It is also satisfied in the age of information setting \cite{TM24}.
\end{remark}

Assumption~\ref{ass:PFgeF} implies the monotonicity of the expected state costs
along elapsed time before the next restart.

\medskip
\begin{lemma}[Monotonicity of expected state costs]
\label{lem:PkF_increasing}
Under Assumption~\ref{ass:PFgeF}, for any distribution {$\mu$ on ${\cal B}({\bf X})$ (or simply say on ${\bf X}$)}, 
{ $\int_{\bf X} \int_{\bf X} \mu(dx) P^{k+1}(dy \mid x) F(y)\ge\int_{\bf X} \int_{\bf X} {\mu(dx)} P^{k}(dy \mid x) F(y).$}

\end{lemma}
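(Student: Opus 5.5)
The plan is to reduce the claim to a single application of Assumption~\ref{ass:PFgeF} under the measure $\mu P^k$. Since everything is nonnegative ($F$ is $[0,\infty]$-valued), all integrals are well defined in $[0,\infty]$, and Tonelli's theorem lets us change the order of integration freely. No integrability hypothesis is needed.

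First, I would write the left-hand side using the Chapman--Kolmogorov identity $P^{k+1}(dy\mid x)=\int_{\bf X}P^k(dz\mid x)P(dy\mid z)$. Then I would apply Tonelli to get
\[
\int_{\bf X}\int_{\bf X}\mu(dx)P^{k+1}(dy\mid x)F(y)=\int_{\bf X}\int_{\bf X}\mu(dx)P^{k}(dz\mid x)(PF)(z).
\]
Here $(PF)(z)=\int_{\bf X}F(y)P(dy\mid z)$ is a $[0,\infty]$-valued measurable function, as required.

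Second, I would apply \eqref{eq:PFgeF} pointwise, $(PF)(z)\ge F(z)$ for every $z$. Monotonicity of the integral of nonnegative functions under the measure $\int_{\bf X}\mu(dx)P^k(\cdot\mid x)$ then yields
\[
\int_{\bf X}\int_{\bf X}\mu(dx)P^{k}(dz\mid x)F(z),
\]
which is exactly the right-hand side after renaming $z$ as $y$.

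Alternatively, one can proceed by induction on $k$. The case $k=0$ is the integrated assumption. For the inductive step, replace $\mu$ by $\mu P$; this works because the statement is asserted for arbitrary distributions $\mu$.

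The only point needing care is the possibly infinite values of $F$. Tonelli handles this, and no subtraction of infinite quantities ever appears, so I do not expect a genuine obstacle.
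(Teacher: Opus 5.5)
Your proposal is correct and follows essentially the same route as the paper: you decompose $P^{k+1}=P^kP$, integrate out the last step to obtain $(PF)(z)$, and apply Assumption~\ref{ass:PFgeF} pointwise under $\mu P^k$. Your explicit appeal to Tonelli's theorem for the $[0,\infty]$-valued $F$ makes precise the interchange of integrals that the paper uses without comment.
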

\medskip
\begin{proof}
We can write
\begin{align*}
&\int_{\bf X} \int_{\bf X} {\mu}(dx) P^{k+1}(dy \mid x) F(y)\\
=&
\int_{\bf X} \int_{\bf X} \int_{\bf X} {\mu}(dx) P^{k}(dz \mid x) P(dy \mid z) F(y)\\
=&
\int_{\bf X} \int_{\bf X} {\mu}(dx) P^{k}(dz \mid x) \int_{\bf X} P(dy \mid z) F(y)\\
\ge& \int_{\bf X} \int_{\bf X} {\mu}(dx) P^{k}(dz \mid x) F(z), 
\end{align*}
where for the last inequality, we used the condition (\ref{eq:PFgeF}).
\end{proof}

\medskip
\begin{theorem}
\label{thm:elapsed_time_threshold}
Suppose {Assumption \ref{ass:PFgeF} holds}, {and let $\gamma\in(0,1]$ be fixed.}
Then there exists a {$\gamma$-discounted} optimal deterministic stationary policy {$\varphi_\gamma^\star$} 
with the following {threshold} structure:
for each $x\in {\bf X}$ there exists a threshold index ${k^\star_\gamma}(x)\in \mathbb N_0\cup\{\infty\}$ 
such that { \begin{align}\label{Kostia2026Eqn08}
\varphi^\star_\gamma(x,k)=\begin{cases}
r & \mbox{ if $k\ge k^{\star}_\gamma(x)$;}\\
n & \mbox{ if $k<k^{\star}_\gamma(x)$,} 
\end{cases}
\end{align}
cf. Definition \ref{Kostia2026Def01}.} 
Equivalently, along each post-restart ray $\{(x,k):k\in\mathbb N_0\}$ the restart region is a tail set.
\end{theorem}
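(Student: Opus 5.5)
The plan is to show that, for each fixed $x\in{\bf X}$, the map $k\mapsto V_\gamma(x,k)$ is nondecreasing on $\mathbb N_0$. From this the threshold structure follows from the Bellman equation \eqref{eq:Bellman_xk_simplified}. Once monotonicity holds, the set $\{k: G(n)+\gamma V_\gamma(x,k+1)\ge A(\gamma)\}$ of indices where $r$ is $\gamma$-discounted optimal is a tail set. We then define
\[
k^\star_\gamma(x):=\inf\{k\in\mathbb N_0:\ G(n)+\gamma V_\gamma(x,k+1)\ge A(\gamma)\},
\]
with $\inf\emptyset=\infty$, and set $\varphi^\star_\gamma$ as in \eqref{Kostia2026Eqn08}. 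This choice picks an optimal action at every $(x,k)$: for $k\ge k^\star_\gamma(x)$, monotonicity keeps $r$ optimal, and for $k<k^\star_\gamma(x)$ the strict inequality makes $n$ optimal. The mapping is measurable, because $V_\gamma$ is measurable and $k^\star_\gamma(x)$ is a countable infimum of measurable events. By the characterization recalled after the definition of $\gamma$-discounted optimal actions, cf. \cite[Theorem 3.1]{FeinbergORL:2021}, such a $\varphi^\star_\gamma$ is $\gamma$-discounted optimal. One caveat concerns values $+\infty$. If $A(\gamma)=\infty$ and $V_\gamma(x,k+1)=\infty$, both actions attain the minimum, and the definition is still consistent since $\ge$ holds with equality.

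To prove the monotonicity of $V_\gamma(x,\cdot)$ I would use value iteration. Since the costs are nonnegative and ${\bf A}$ is finite, \cite[Theorem 3.1]{FeinbergORL:2021} gives $V_\gamma$ as the increasing limit of $V^{(m)}$, where $V^{(0)}\equiv 0$ and
\[
V^{(m+1)}(x,k)=(P^kF)(x)+\min\Big\{G(n)+\gamma V^{(m)}(x,k+1),\ G(r)+\gamma\int_{\bf X} V^{(m)}(z,0)\nu(dz)\Big\}.
\]
If that convergence statement is not directly available in the cited form, I would instead use finite-horizon value functions, which converge monotonically to $V_\gamma$ for positive models with finite action sets. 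The induction hypothesis is that $V^{(m)}(x,k+1)\ge V^{(m)}(x,k)$ for all $x,k$. Lemma~\ref{lem:PkF_increasing} with $\mu=\delta_x$ gives $(P^{k+1}F)(x)\ge (P^kF)(x)$. The restart branch of the minimum does not depend on $k$. The continuation branch is nondecreasing in $k$ by the induction hypothesis. Hence the minimum is nondecreasing in $k$, and therefore so is $V^{(m+1)}(x,\cdot)$. Passing to the limit preserves the inequality, including when values are infinite.

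The main obstacle is justifying the value-iteration convergence to $V_\gamma$ in the undiscounted case $\gamma=1$ with $[0,\infty]$-valued costs, where uniqueness of solutions of the Bellman equation fails. If that step causes trouble, I would give a direct pathwise coupling argument instead. From $(x,k+1)$, consider any policy $\pi$, and let $\pi'$ be the policy from $(x,k)$ that mimics $\pi$ with the elapsed-time index shifted by one until the first restart. Both processes restart at the same epoch, after which they coincide. Before the restart, the stage costs satisfy $(P^{k+j}F)(x)\le(P^{k+1+j}F)(x)$ by Lemma~\ref{lem:PkF_increasing}. So $V^{\pi'}_\gamma(x,k)\le V^\pi_\gamma(x,k+1)$, and taking the infimum over $\pi$ gives $V_\gamma(x,k)\le V_\gamma(x,k+1)$ without any convergence issue.
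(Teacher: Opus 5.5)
Your proposal is correct and follows the paper's proof. The paper also shows that $k\mapsto V_\gamma(x,k)$ is nondecreasing by value iteration from $V^{(0)}\equiv 0$, using Lemma~\ref{lem:PkF_increasing} and the convergence $V^{(m)}\uparrow V_\gamma$ from \cite[Theorem 3.1]{FeinbergORL:2021}, which it applies for all $\gamma\in(0,1]$ including $\gamma=1$; it then defines $k^\star_\gamma$ by the same tail-set infimum. Your backup coupling argument, which shifts the elapsed-time index until the first restart, is also valid and gives the monotonicity without any convergence result, though the cited theorem already covers that step.
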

\begin{proof}
Let us first prove that, for each fixed $x{\in {\bf X}}$, the map $k\mapsto {V_\gamma}(x,k)$ is nondecreasing.
Towards this goal, let $\mathcal V$ be the space of {$[0,\infty]$-valued} measurable functions $v: {\bf S}\to\mathbb R$.
 Define {the Bellman operator $T_\gamma$ on ${\cal V}$ by} 
\begin{align}\label{Kostia2026Eqn10}
({T_\gamma} v)(x,k):=(P^kF)(x)+
\end{align}
\[
\min\Big\{G(n)+\gamma v(x,k+1), G(r)+\gamma {\int_{\bf X}v(z,0)\nu(dz)} \Big\}
\]
for each $v\in\mathcal V$ {and $(x,k)\in {\bf S}$}.

Fix $x\in {\bf X}$.  {Observe} that if $v(x,k)$ is nondecreasing in $k\in {\mathbb{N}_0}$ {for some $v\in {\cal V}$}, then so is {$T_\gamma v(x,k)$}.
Indeed, by Lemma~\ref{lem:PkF_increasing}, $k\mapsto (P^kF)(x)$ is nondecreasing.
Also, $k\mapsto v(x,k+1)$ is nondecreasing whenever $k\mapsto v(x,k)$ is.
Therefore, $k\mapsto G(n)+\gamma v(x,k+1)$ is nondecreasing, while the quantity
$G(r)+\gamma{\int_{\bf X}v(z,0)\nu(dz)}$ is independent of $k$.
Hence the {(pointwise)} minimum {between} a nondecreasing function and a constant {function} is nondecreasing, 
and adding $(P^kF)(x)$ preserves {this monotonicity}. 

Now start value iteration from $V^{(0)}\equiv 0$. Then $V^{(0)}(x,k)$ is nondecreasing in $k$ for every $x$.
By the induction principle $V^{(m)}{(x,k)}:={T^m_\gamma} V^{(0)}{(x,k)}$ is nondecreasing in $k$ for every $x$ and every $m\ge0$, {where $T_\gamma^m$ is the $m$th composition of $T_\gamma$}.
{By \cite[Theorem 3.1]{FeinbergORL:2021}}, $V^{(m)}$ increases to {$V_\gamma$ pointwise. Then, $V_\gamma(x,k)$} is also nondecreasing in $k$ for every fixed $x$.

Fix $x\in {\bf X}$. 
{Recall that the action $r$ is $\gamma$-discounted} optimal at $(x,k)$ if and only if
$
{A(\gamma)}\le G(n)+\gamma V_\gamma(x,k+1).
$
{Since $k\mapsto {V_\gamma}(x,k+1)$ is nondecreasing}, the set {$\{k\in\mathbb N_0: A(\gamma)\le G(n)+\gamma V_\gamma(x,k+1)\}$} is a tail set.
Define
\begin{align}\label{eq:kstar_def}
{k^\star_\gamma(x)}:= & \inf\{k\in\mathbb N_0: {A(\gamma)\le G(n)+\gamma V_\gamma(x,k+1)}\}\nonumber\\
= & \inf\{k\in\mathbb N_0:\ \text{$r$ is {$\gamma$-discounted} optimal at } (x,k)\}\nonumber\\ 
\in & \ \mathbb N_0\cup\{\infty\}.
\end{align}
Then {$A(\gamma)> G(n)+\gamma V_\gamma(x,k+1)$} for $k<k^\star_\gamma(x)$ and {$A(\gamma)\le G(n)+\gamma V_\gamma(x,k+1)$} for $k\ge {k^\star_\gamma}(x)$, which is exactly the asserted
threshold structure.

Finally, the policy {$\varphi^\star_\gamma$ defined by (\ref{Kostia2026Eqn08}) is $\gamma$-discounted} optimal {because at each $(x,k)\in {\bf S}$ it selects an $\gamma$-discounted optimal action at $(x,k)$}. 
\end{proof}

\subsection{Monotonicity of threshold in state}

{In this subsection}, let {the Borel state space} ${\bf X}$ be  equipped with a partial order {$\le_{\bf X}$}  such that, 
for every $x\in {\bf X}$, the principal upper set
\[
\uparrow x := \{y\in {\bf X}:\ {x\le_{\bf X} y} \}
\]
is ${\cal B}({\bf X})$-measurable.  {A $[-\infty,\infty]$-valued function $g$ on ${\bf X}$ is nondecreasing  if $g(x)\le g(y)$ whenever $x\le_{\bf X} y$, cf. \cite{Muller2002}.}

The existence of the partial order allows us to investigate
the monotonicity of the threshold index {$k_\gamma^\star$ coming from Theorem \ref{thm:elapsed_time_threshold}, defined in (\ref{eq:kstar_def}).} {Here and throughout this section we consider a fixed $\gamma\in(0,1].$}

\medskip
\begin{theorem}
\label{thm:kstar_monotone_in_x}
{Let ${\bf X}$ be endowed with the partial order $\le_{\bf X}$}, {$F$} be nondecreasing with respect to the partial order {$\le_{\bf X}$}, and the Markov kernel $P$ be \emph{stochastically monotone} 
with respect to {$\le_{\bf X}$}, i.e., for every bounded measurable nondecreasing $g:{\bf X}\to\mathbb R$ the map
$x\mapsto \int_{\bf X} g(y)\,P(dy \mid x)$ is nondecreasing.  Suppose that
 {Assumption \ref{ass:PFgeF} holds}, and {$\gamma\in(0,1]$ is fixed}.  Then {the threshold index $k_\gamma^\star$ in Theorem~\ref{thm:elapsed_time_threshold}, defined by (\ref{eq:kstar_def}),} is nonincreasing with respect to the partial {order $\le_{\bf X}$}, i.e., {for each $x,x'\in {\bf X}$}
\[
x{\le_{\bf X}} x' \quad \Longrightarrow \quad {k^\star_\gamma}(x) \ge {k^\star_\gamma}(x').
\]
\end{theorem}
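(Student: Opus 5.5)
Since $k^\star_\gamma(x)$ in (\ref{eq:kstar_def}) is the first $k$ with $A(\gamma)\le G(n)+\gamma V_\gamma(x,k+1)$, and $A(\gamma)$ does not depend on $x$, it suffices to show that for each fixed $k\in\mathbb N_0$ the map $x\mapsto V_\gamma(x,k)$ is nondecreasing with respect to $\le_{\bf X}$. Then, for $x\le_{\bf X}x'$, every $k$ in the set defining $k^\star_\gamma(x)$ satisfies
\[
A(\gamma)\le G(n)+\gamma V_\gamma(x,k+1)\le G(n)+\gamma V_\gamma(x',k+1),
\]
so it also lies in the set defining $k^\star_\gamma(x')$. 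The infimum over the larger set is smaller, which gives $k^\star_\gamma(x')\le k^\star_\gamma(x)$.

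First I will show that $x\mapsto (P^kF)(x)$ is nondecreasing for each $k$. I will argue by induction on $k$. The case $k=0$ is the hypothesis on $F$. For the induction step, $P^{k+1}F=P(P^kF)$, and $P^kF$ is nondecreasing, $[0,\infty]$-valued and measurable. Stochastic monotonicity is stated only for bounded $g$, so I extend it. For a nonnegative nondecreasing measurable $h$, the truncations $h\wedge m$ are bounded, nondecreasing and measurable. Hence $x\mapsto P(h\wedge m)(x)$ is nondecreasing for each $m$, and monotone convergence passes the property to $Ph$, even when the value is $+\infty$. This truncation step is the main technical point, because $F$ may be unbounded or take infinite values. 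I do not expect it to be a real obstacle.

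Next I will run the value iteration from the proof of Theorem~\ref{thm:elapsed_time_threshold}, starting from $V^{(0)}\equiv0$. The inductive claim is that if $v\in\mathcal V$ is such that $x\mapsto v(x,k)$ is nondecreasing for each $k$, then so is $x\mapsto (T_\gamma v)(x,k)$. This holds for three reasons:
\begin{itemize}
\item $(P^kF)(x)$ is nondecreasing in $x$ by the previous step;
\item $G(n)+\gamma v(x,k+1)$ is nondecreasing in $x$ by hypothesis;
\item $G(r)+\gamma\int v(z,0)\nu(dz)$ is a constant not depending on $x$.
\end{itemize}
The minimum of a nondecreasing function and a constant is nondecreasing, and adding a nondecreasing term preserves this; the identities hold in $[0,\infty]$ as well. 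Hence every iterate $V^{(m)}=T_\gamma^mV^{(0)}$ is nondecreasing in $x$ for each $k$. By \cite[Theorem 3.1]{FeinbergORL:2021}, $V^{(m)}\uparrow V_\gamma$ pointwise, so the pointwise limit $V_\gamma(\cdot,k)$ is nondecreasing in $x$ for each $k$.

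Combining this with the reduction in the first paragraph concludes the proof. Assumption~\ref{ass:PFgeF} is used only through Theorem~\ref{thm:elapsed_time_threshold}, which guarantees that $k^\star_\gamma$ actually defines the threshold policy; the monotonicity comparison itself does not need it.
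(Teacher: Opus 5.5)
Your proposal is correct and follows essentially the same route as the paper. Both arguments run value iteration from $V^{(0)}\equiv 0$ to show that $x\mapsto V_\gamma(x,k)$ is nondecreasing, using truncation and monotone convergence to handle unbounded $F$ in $P^kF$, and then compare the sets defining $k^\star_\gamma(x)$ and $k^\star_\gamma(x')$; the only differences are cosmetic (you get monotonicity of $P^kF$ by induction on $k$ rather than by citing stochastic monotonicity of $P^k$, and you rightly observe that the infimum comparison does not need the tail-set property).
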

\medskip
\begin{proof}
The proof is similar to the one of Theorem \ref{thm:elapsed_time_threshold}, and we shall use the notations introduced therein. 

Let us first show that, for each fixed $k$, the map $x\mapsto {V_\gamma}(x,k)$ is nondecreasing
with respect to the partial order {$\le_{\bf X}$.}
{Consider the Bellman operator $T_\gamma$ on ${\cal V}$ defined by (\ref{Kostia2026Eqn10}), where ${\cal V}$ is the set of $[0,\infty]$-valued measurable functions on ${\bf X}$.}
We claim that if, for every $k$ {and $v\in {\cal V}$}, the function $x\mapsto v(x,k)$ is nondecreasing, 
then so is $x\mapsto (T_\gamma v)(x,k)$. {The justification is as follows.}

Fix $k$. By the assumption of stochastic monotonicity applied repeatedly, the kernel $P^k$ is also
stochastically monotone \cite{Daley1968}. Since $F$ is nondecreasing {by assumption}, {for each $N\in \mathbb{N}_0$, $F_N:=\min\{F,N\}$ is bounded nondecreasing, and the map $x\mapsto (P^kF_N)(x)$ is nondecreasing. By the monotone convergence theorem, this implies that}  $x\mapsto (P^kF)(x){=\lim_{N\rightarrow \infty}  (P^kF_N)(x)}$ is nondecreasing. 
Since $x\mapsto v(x,k)$ is nondecreasing {by assumption}, so is $x\mapsto v(x,k+1)$. Hence, $x\mapsto G(n)+\gamma v(x,k+1)$ 
is nondecreasing, while the quantity $G(r)+\gamma\int_{\bf X} v(z,0)\,\nu(dz)$ is constant in $x$. Therefore, 
the minimum of these two terms is nondecreasing in $x$, and adding $(P^kF)(x)$ preserves this property. 
This proves that $(T_\gamma v)(x,k)$ is nondecreasing {in $x$}.

Now start value iteration with ${V^{(0)}}\equiv 0$, which is nondecreasing, and iterate ${V^{(m+1)}}:=T_\gamma{V^{(m)}}$.
Then, by the principle of induction, $x\mapsto {V^{(m)}}(x,k)$ is nondecreasing for all $m$.
{Now, by \cite[Theorem 3.1]{FeinbergORL:2021}, $V_\gamma(\cdot,k)$ is} 
the pointwise limit of the sequence {$\{V^{(m)}(\cdot,k)\}_{m=0}^\infty$} of nondecreasing functions, and is therefore, nondecreasing.

{Recall that the} action $r$ is {$\gamma$-discounted} optimal at $(x,k)$ if and only if
\begin{align*}
{A(\gamma)} \le G(n)+\gamma {V_\gamma}(x,k+1).
\end{align*}
The left-hand side is constant in $x$, and the right-hand side is nondecreasing in $x$ as proven above. 
Hence, for each fixed $k$, the set
\[
R_k:=\{x\in {\bf X}:\ \text{$r$ is {$\gamma$-discounted optimal} at }(x,k)\}
\]
is an upper set: if $x\in R_k$ and {$x\le_{\bf X} x'$}, then $x'\in R_k$.

Let us now show the monotonicity of $k^\star_\gamma$. Recall the definition
of $k^\star_\gamma(x)$ in (\ref{eq:kstar_def}).
Fix $x\le_{\bf X}x'$ and $k\in \mathbb{N}_0$. If $r$ is $\gamma$-discounted optimal at $(x,k)$, 
i.e., $x\in R_k$, then, since $R_k$ is an upper set, $x'\in R_k$ and thus $r$ is $\gamma$-discounted optimal at $(x',k)$.
Therefore, 
$\{k:\ r\mbox{ is $\gamma$-discounted optimal at }(x,k)\}\subseteq \{k:\ r\mbox{ is $\gamma$-discounted optimal at }(x',k)\}$. Since both sets are tail sets by Theorem~\ref{thm:elapsed_time_threshold}, $k^\star_\gamma(x')\le k^\star_\gamma(x)$.
\end{proof}
\smallskip

Note that Assumption~\ref{ass:PFgeF} can be guaranteed by the assumption of `no-downward-move' or
`irreversible deterioration' in the reliability terminology \cite{BP96}; see the next lemma.

\medskip
\begin{lemma}
\label{lem:ND_implies_PF_ge_F}
Let {${\bf X}$} be equipped with the partial order $\le_{\bf X}$, and $F$ be nondecreasing with respect to $\le_{{\bf X}}$.
Let the Markov kernel $P$ satisfy the \emph{no-downward-move}
condition
\begin{equation}
\label{eq:ND}
P(\uparrow x\mid x)=1 \qquad \forall x\in {\bf X}.
\end{equation}
Then  Assumption \ref{ass:PFgeF} holds.
\end{lemma}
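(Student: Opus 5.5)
The plan is to verify inequality \eqref{eq:PFgeF} pointwise, fixing $x\in{\bf X}$ and exploiting that, under \eqref{eq:ND}, the measure $P(\cdot\mid x)$ sits entirely on the upper set $\uparrow x$, where $F$ is at least $F(x)$ by monotonicity. The set $\uparrow x$ is ${\cal B}({\bf X})$-measurable by the standing hypothesis of this subsection, so $P(\uparrow x\mid x)$ is well defined and integration over it makes sense.

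First I would note that $F$ is a $[0,\infty]$-valued measurable function, so $(PF)(x)=\int_{\bf X}F(y)\,P(dy\mid x)$ is a well-defined element of $[0,\infty]$ and no integrability issue arises. Next I would split the integral over $\uparrow x$ and its complement. Since $P({\bf X}\setminus\uparrow x\mid x)=0$ by \eqref{eq:ND}, the integral over the complement vanishes; this uses the convention $\infty\cdot 0=0$, which is legitimate for nonnegative integrands even when $F$ takes the value $\infty$ there. This gives
\[
(PF)(x)=\int_{\uparrow x}F(y)\,P(dy\mid x).
\]
For every $y\in\uparrow x$ we have $x\le_{\bf X}y$, and hence $F(y)\ge F(x)$ because $F$ is nondecreasing. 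Integrating this bound against $P(\cdot\mid x)$ over $\uparrow x$ yields
\[
(PF)(x)\ge F(x)\,P(\uparrow x\mid x)=F(x).
\]
The case $F(x)=\infty$ is consistent with this: then $F\equiv\infty$ on $\uparrow x$, so $(PF)(x)=\infty$ as well.

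There is no real obstacle here. The only care needed is with measurability of $\uparrow x$, which is assumed, and with the extended-real-valued arithmetic, which is handled by the nonnegativity of $F$. Since $x$ was arbitrary, \eqref{eq:PFgeF} holds for all $x\in{\bf X}$, and so Assumption~\ref{ass:PFgeF} is satisfied.
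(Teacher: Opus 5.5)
Your proof is correct and takes essentially the same route as the paper: fix $x$, use \eqref{eq:ND} to see that $P(\cdot\mid x)$ is concentrated on $\uparrow x$, where $F(y)\ge F(x)$ by monotonicity, and integrate. Your extra remarks on the measurability of $\uparrow x$ and on the $[0,\infty]$-valued arithmetic (including the case $F(x)=\infty$) are accurate details that the paper leaves implicit.
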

\medskip
\begin{proof}
Fix $x\in {\bf X}$. By \eqref{eq:ND}, the measure $P(\cdot\mid x)$ is supported on $\uparrow x$, i.e.,
$P({\bf X}\setminus \uparrow x \mid x)=0$. Since $F$ is nondecreasing, we have
\[
y\; \in \; \uparrow x \ \Longrightarrow\ {x\le_{\bf X}y} \ \Longrightarrow\ F(y)\ge F(x).
\]
Therefore, $F(y)\ge F(x)$ holds for $P(\cdot\mid x)$-almost {all $y\in{\bf X}$}, and integrating {with respect to $P(\cdot\mid x)$} yields
\[
(PF)(x)=\int_{\bf X} F(y)P(dy| x)\ge\int_{\bf X} F(x)P(dy| x)=F(x),
\]
which establishes (\ref{eq:PFgeF}).
\end{proof}
\medskip

\section{Threshold structure for average cost criterion}

Under Assumption  \ref{ass:PFgeF}, let us consider $k_\gamma^\star$ coming from Theorem \ref{thm:elapsed_time_threshold}, defined in (\ref{eq:kstar_def}). In this section, we first show that for some $\gamma_0\in(0,1)$ and $\bar{K}\ge 1$, $k_\gamma^\star(x)<\bar{K}$ for all $\gamma\in [\gamma_0,1)$ and $x\in {\bf X}$, under some additional assumptions. This result allows us to bound the relative value function, and then to deduce the existence of an average optimal deterministic stationary policy with the threshold structure (cf. Definition \ref{Kostia2026Def01}) later in this section.   

\subsection{Uniform {boundedness} of the optimal threshold}  

Let us make the following additional assumptions:


\begin{assumption}[Geometric ergodicity of state cost]
\label{ass:UniGeo}
There exist constants $L\in\mathbb R$, $C<\infty$, and $\rho\in(0,1)$ such that
\begin{equation}
\label{eq:A2_geo}
\sup_{x\in {{\bf X}}}\big|(P^tF)(x)-L\big|\le C\rho^t,\qquad \forall t\in\mathbb N_0.
\end{equation}
\end{assumption}
Assumption \ref{ass:UniGeo} is satisfied if e.g., ${\bf X}$ is finite, $F$ is finite-valued, and $P$ is unichain and aperiodic.  
{Assumption \ref{ass:UniGeo} implies in particular that the $[0,\infty]$-valued function $F$ is bounded (by e.g., $L+C$). Together with Assumption \ref{ass:PFgeF}, Assumption \ref{ass:UniGeo} further implies for each $x\in {\bf X}$ that the sequence $\{(P^kF)(x)\}_{k=0}^\infty$ increases to $L$, and in particular,
\begin{equation}\label{eq:PkF_le_L}
(P^kF)(x)\le L,\qquad \forall x\in X,\ k\in\mathbb N_0,
\end{equation}
and consequently $\nu P^kF\le L$ for all $k$.}
 
Let $\varphi^n$ be the deterministic stationary policy defined by $\varphi^n(x,k)\equiv n$. This is the policy that always selects the action $n$, and is referred to as the always-$n$ policy.   The next lemma asserts that if for some $\gamma\in(0,1)$, $k_\gamma^\star(x)=\infty$ for some $x\in {\bf X}$, then $\varphi^n$ is $\gamma$-discounted optimal. 
\medskip
\begin{lemma}\label{Kostia2026Lem01}
Suppose Assumptions \ref{ass:PFgeF} and \ref{ass:UniGeo} are satisfied, and let $\gamma\in (0,1)$ be given. Consider $k_\gamma^\star$ coming from Theorem \ref{thm:elapsed_time_threshold}, defined in (\ref{eq:kstar_def}).  If $k^\star_\gamma(x)=\infty$ for some $x,$ then the always-$n$ policy $\varphi^n$ is $\gamma$-discounted optimal, i.e., $V_\gamma^{\varphi^n}(x,k)=V_\gamma(x,k)$ for all $x\in {\bf X}$ and $k\ge 0.$
\end{lemma}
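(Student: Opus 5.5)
The plan is to exploit the fact that $k^\star_\gamma(x)=\infty$ pins down $V_\gamma$ along the whole ray $\{(x,k):k\in\mathbb N_0\}$. That in turn forces $A(\gamma)$ to be at least the value of continuing forever, which makes $n$ optimal at every state of ${\bf S}$. Optimality of $\varphi^n$ then follows from the remark after the definition of $\gamma$-discounted optimal actions: a deterministic stationary policy that selects an optimal action at every state is $\gamma$-discounted optimal (\cite[Theorem 3.1]{FeinbergORL:2021}).

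\emph{Step 1 (a global upper bound).} For every $(z,j)\in{\bf S}$ I will use (\ref{eq:PkF_le_L}) to write
\[
V_\gamma(z,j)\le V^{\varphi^n}_\gamma(z,j)=\sum_{t\ge0}\gamma^t\big((P^{j+t}F)(z)+G(n)\big)\le \frac{L+G(n)}{1-\gamma}=:B.
\]
This bound is finite since $\gamma<1$.

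\emph{Step 2 (lower bound on $A(\gamma)$ from the ray through $x$).} Since $k^\star_\gamma(x)=\infty$, the definition (\ref{eq:kstar_def}) gives $A(\gamma)>G(n)+\gamma V_\gamma(x,k+1)$ for all $k$. Hence $n$ attains the minimum in (\ref{eq:Bellman_xk_simplified}) at every $(x,k)$. Iterating (\ref{eq:Bellman_xk_simplified}) $T$ times yields
\[
V_\gamma(x,k)=\sum_{t<T}\gamma^t\big((P^{k+t}F)(x)+G(n)\big)+\gamma^T V_\gamma(x,k+T),
\]
and the last term vanishes as $T\to\infty$ by Step 1. By Lemma~\ref{lem:PkF_increasing} and Assumption~\ref{ass:UniGeo}, $(P^{k+1+t}F)(x)\ge (P^{k+1}F)(x)\ge L-C\rho^{k+1}$. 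Therefore
\[
V_\gamma(x,k+1)\ge \frac{L-C\rho^{k+1}+G(n)}{1-\gamma}.
\]
Inserting this into the strict inequality and letting $k\to\infty$ gives $A(\gamma)\ge G(n)+\gamma B$. Only the inequality ``$\ge$'' survives the limit, but that is all that is needed.

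\emph{Step 3 (conclusion).} For arbitrary $(z,j)\in{\bf S}$, Step 1 gives
\[
G(n)+\gamma V_\gamma(z,j+1)\le G(n)+\gamma B\le A(\gamma),
\]
so $n$ is $\gamma$-discounted optimal at $(z,j)$. Consequently $\varphi^n$ is $\gamma$-discounted optimal, i.e. $V^{\varphi^n}_\gamma=V_\gamma$.

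The main obstacle is Step 2: turning the pointwise optimality of $n$ along one ray into a quantitative lower bound on $A(\gamma)$. Two facts make it work. The identification of $V_\gamma(x,\cdot)$ with the always-$n$ value requires $\gamma<1$ together with boundedness, so that the tail term disappears. The uniform convergence in Assumption~\ref{ass:UniGeo} is what allows the limit $k\to\infty$ to be taken and makes the bound match the uniform upper bound $B$ exactly.
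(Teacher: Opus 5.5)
Your proof is correct and follows essentially the same route as the paper. Along the ray where $k^\star_\gamma(x)=\infty$ you identify $V_\gamma(x,\cdot)$ with the always-$n$ value, then let $k\to\infty$ in the strict inequality to get $A(\gamma)\ge G(n)+\gamma\frac{L+G(n)}{1-\gamma}$, and finally use $V_\gamma\le V_\gamma^{\varphi^n}\le \frac{L+G(n)}{1-\gamma}$ at every state to conclude that $n$ is optimal everywhere. You are slightly more explicit than the paper: you show the tail term $\gamma^T V_\gamma(x,k+T)$ vanishes and use the rate $C\rho^{k+1}$ where the paper uses monotone convergence to $L$, but these differences are cosmetic.
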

\medskip
\begin{proof}
Let $x\in {\bf X}$ be given so that $k^\star_\gamma(x)=\infty$. Then 
\begin{align}\label{Kostia2026Eqn01}
V_\gamma(x,k)=V_\gamma^{\varphi^n}(x,k)= \sum_{t=0}^{\infty}\gamma^{t} (P^{k+t}F)(x) + \frac{G(n)}{1-\gamma}
\end{align}
for all $k\ge 0$,
and
\begin{equation}
\label{Kostia2026Eqn02}
G(n)+\gamma V_\gamma(x,k+1)=G(n)+
\end{equation}
\[
\gamma \left\{ \sum_{t=0}^\infty  \gamma^t(P^{k+1+t}F)(x)+\frac{G(n)}{1-\gamma}\right\} < G(r)+\gamma V_{0,\gamma},
\]
for all $k\ge 0$, where the equality holds by (\ref{Kostia2026Eqn01}) and the inequality is by (\ref{eq:kstar_def}) and the assumption that $k^\star_\gamma(x)=\infty$.

Since under Assumptions \ref{ass:PFgeF} and \ref{ass:UniGeo}    $(P^k F)(x)$ increases to $L$ as $k\uparrow\infty$,
\begin{align}\label{Kostia2026Eqn05}
 &\lim_{k\rightarrow \infty}\left\{G(n)+\gamma \left\{ \sum_{t=0}^\infty  \gamma^t(P^{k+1+t}F)(x)+\frac{G(n)}{1-\gamma}\right\}\right\} \nonumber\\
=& G(n)+\gamma \left\{ \sum_{t=0}^\infty  \gamma^tL+\frac{G(n)}{1-\gamma}\right\}
 \le G(r)+\gamma V_{0,\gamma}.
\end{align}
where the last inequality holds by (\ref{Kostia2026Eqn02}). 

Now let $y\in {\bf X}$ be arbitrarily fixed. 
For each $k\ge 0$, 
\begin{align*}
&G(n)+\gamma V_\gamma(y,k+1)\le G(n) +\gamma V_\gamma^{\varphi^n}(y,k+1)\\
=&  G(n)+\gamma \left\{ \sum_{t=0}^\infty  \gamma^t(P^{k+1+t}F)(y)+\frac{G(n)}{1-\gamma}\right\}\\
\le & G(n)+\gamma \left\{ \sum_{t=0}^\infty  \gamma^tL+\frac{G(n)}{1-\gamma}\right\}
\le G(r)+\gamma V_{0,\gamma}. 
\end{align*}
where the first inequality follows from the definition of the (optimal) value function $V_\gamma$, the second inequality hold because $P^kF(y)$ increases to $L$ as $k\uparrow \infty$ (under Assumptions \ref{ass:PFgeF} and \ref{ass:UniGeo}), and the last inequality holds by (\ref{Kostia2026Eqn05}). Thus, the action $n$ is $\gamma$-discounted optimal at $(y,k)$ for any $y\in{\bf X}$ and $k\ge 0.$ 
\end{proof}

\begin{assumption}[Transient gain dominates]
\label{ass:TG}
There exist $m\in\mathbb N_0$ and $\eta>0$ such that
\begin{equation}\label{eq:A3_TG}
\sum_{j=0}^{m}\Big(L-\nu P^jF\Big)\ \ge\ \Delta G+\eta.
\end{equation}
where $\Delta G := G(r)-G(n) > 0$.
\end{assumption}
\smallskip

{It will be seen in the proof of Theorem \ref{thm:uni_bound_threshold} that Assumption \ref{ass:TG} excludes the possibility that $k_\gamma^\ast(x)=\infty$ for any $x\in {\bf X}$ and $\gamma\in (0,1)$ sufficiently close to $1$.} 

Under the above assumptions, we can produce a uniform bound on the threshold {$k_\gamma^\star(x)$ with respect to all $x\in {\bf X}$ and all $\gamma\in (0,1)$ sufficiently close to $1$.}
\medskip
\begin{theorem}
\label{thm:uni_bound_threshold}
Let {Assumptions \ref{ass:PFgeF},~\ref{ass:UniGeo} and \ref{ass:TG} hold, and consider $k_\gamma^\star$ coming from Theorem \ref{thm:elapsed_time_threshold}, defined in (\ref{eq:kstar_def}).}
Then there exist $\gamma_0\in(0,1)$ and a {nonnegative} integer $K<\infty$ such that  
\begin{equation}\label{eq:uniform_bound_conclusion}
\sup_{x\in {\bf X}} {k^\star_\gamma}(x)< {K+1 =: \bar{K}}, \quad {\forall \gamma\in[\gamma_0,1)}.
\end{equation}
In particular, the optimal threshold {$k_\gamma^\star(x)$} is uniformly bounded in $x{\in{\bf X}}$ and 
  uniformly bounded in $\gamma{\in [\gamma_0,1)}$.
\medskip
Moreover, one may take
\begin{equation}\label{eq:delta0_def}
K=\min\Big\{t\in\mathbb N_0: C\rho^{t}\le \delta_0/4\Big\},
\ \mbox{with} \
\delta_0:=\frac{\eta\,\gamma_0^{m}}{m+1},
\end{equation}
where $\gamma_0$ {can be any constant in $(0,1)$ that} satisfies the {inequalities} 
$(1-\gamma_0)\Delta G\le \eta\gamma_0^{m+1}/(2(m+1))$ {and 
$(1-\gamma_0^{m+1})\Delta G <\eta \gamma_0^{m+1}.$ (Such a constant exists because the left-hand sides (or right-hand sides) of the previous two inequalities) converge to $0$ (resp., a positive constant) as $\gamma_0$ increases to $1$.)} 
\end{theorem}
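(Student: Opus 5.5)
The plan is to show directly that restart is $\gamma$-discounted optimal at every $(x,k)$ with $k\ge K$, uniformly in $x$ and in $\gamma\in[\gamma_0,1)$. Then (\ref{eq:kstar_def}) gives $k^\star_\gamma(x)\le K<\bar K$. By the definition of optimal actions, it suffices to prove
\[
A(\gamma)\le G(n)+\gamma V_\gamma(x,k+1)\qquad\text{for all }x\in{\bf X},\ k\ge K .
\]
I would obtain this from an \emph{upper} bound on $A(\gamma)$, which uses Assumption~\ref{ass:TG}, and a \emph{lower} bound on $V_\gamma(x,k+1)$ for large $k$, which uses Assumption~\ref{ass:UniGeo} and the choice of $K$.

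\textbf{Step 1 (upper bound on the restart value).} From $(z,0)$, use the cyclic policy that applies $n$ at elapsed times $0,\dots,m$ and restarts at elapsed time $m+1$. A renewal argument, or equivalently iterating the Bellman inequality $V_\gamma\le T_\gamma V_\gamma$ along the cycle and integrating in $\nu$, gives $V_{0,\gamma}$ at most the discounted cycle cost divided by $1-\gamma^{m+2}$. Write $\nu P^jF=L-(L-\nu P^jF)$ and use (\ref{eq:A3_TG}) together with $\gamma^j\ge\gamma_0^m$ for $j\le m$. This should yield a bound of the form
\[
(1-\gamma)A(\gamma)\le L+G(n)-\delta_0+(\text{error terms}).
\]
Here the restart surcharge $\Delta G$ enters only once per cycle. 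The error terms are of order $(1-\gamma)\Delta G$ and $(1-\gamma^{m+1})\Delta G$, and they are absorbed by the two defining inequalities for $\gamma_0$. The aim is a clean estimate $(1-\gamma)A(\gamma)\le L+G(n)-\delta_0/2$ for all $\gamma\in[\gamma_0,1)$.

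\textbf{Step 2 (lower bound on continuation).} For $j\ge K$, (\ref{eq:A2_geo}) gives $(P^jF)(x)\ge L-C\rho^K\ge L-\delta_0/4$. Moreover, on $\{(x,j):j\ge K\}$, action $n$ only increases $j$. Hence any policy started at $(x,k+1)$ either never restarts, paying at least $\sum_t\gamma^t(L-\delta_0/4+G(n))$, or restarts first at some time $\tau$, paying at least $\sum_{t<\tau}\gamma^t(L-\delta_0/4+G(n))+\gamma^{\tau}\bigl(L-\delta_0/4+A(\gamma)\bigr)$; the last summand is the cost incurred at the restart step itself. Taking the infimum over $\tau\in\mathbb N_0\cup\{\infty\}$ gives a lower bound on $V_\gamma(x,k+1)$ that depends only on $A(\gamma)$, $L-\delta_0/4$ and $G(n)$.

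\textbf{Step 3 (comparison).} Substituting the Step 2 bound into $G(n)+\gamma V_\gamma(x,k+1)-A(\gamma)$ gives, for each $\tau$, an expression of the form $\sum_{t=0}^{\tau}\gamma^t c_t-(1-\gamma^{\tau+1})A(\gamma)$ with per-step rates $c_t\ge L-\delta_0/4+G(n)$. This is nonnegative as soon as $(1-\gamma)A(\gamma)\le L+G(n)-\delta_0/4$, which follows from Step 1 with margin $\delta_0/4$.

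\textbf{Main obstacle.} I expect Step 1 to be the hardest part. The bookkeeping of discount factors inside a cycle of length $m+2$ versus the normalization by $1-\gamma$ must be done carefully. The first-step asymmetry in Step 3 must also be handled: the term $G(n)$ appears without an accompanying $F$-cost, since the $F$-cost at $(x,k)$ cancels on both sides. I would first try to check that the two stated conditions on $\gamma_0$ are exactly what closes these estimates. If they do not suffice, I would fall back to the cruder approach of shrinking the margin, replacing $\delta_0/4$ by a smaller fraction, which only changes the constant in the definition of $K$.
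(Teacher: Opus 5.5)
There is a genuine, though repairable, gap where Step~1 meets Step~3. The sufficient condition you state in Step~3 is wrong, and the reason is exactly the first-step asymmetry you flagged but did not resolve. Put $\ell:=L-\delta_0/4$. For a first restart at time $\tau\in\mathbb N_0\cup\{\infty\}$ from $(x,k+1)$, your Step~2 bound gives
\[
G(n)+\gamma V_\gamma(x,k+1)-A(\gamma)\ \ge\ \frac{1-\gamma^{\tau+1}}{1-\gamma}\Bigl[G(n)+\gamma\ell-(1-\gamma)A(\gamma)\Bigr],\qquad \gamma^{\infty}:=0.
\]
The $G(n)$ paid at step $t$ pairs with the $F$-cost paid at step $t+1$, so the effective per-period rate is $G(n)+\gamma\ell$, not $G(n)+\ell$. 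What you need is therefore $(1-\gamma)A(\gamma)\le G(n)+\gamma(L-\delta_0/4)$. Your Step~1 target $(1-\gamma)A(\gamma)\le L+G(n)-\delta_0/2$ exceeds this right-hand side by $(1-\gamma)L-(2-\gamma)\delta_0/4$. That quantity is positive whenever $(1-\gamma)L$ is large compared with $\delta_0$, and the conditions on $\gamma_0$ do not involve $L$ at all. As written, Step~1 does not deliver what Step~3 consumes.

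The repair is to keep the factor $\gamma$. Since $(1-\gamma)A(\gamma)=(1-\gamma)G(r)+\gamma(1-\gamma)V_{0,\gamma}$, a bound $(1-\gamma)V_{0,\gamma}\le L+G(n)-\delta_0$ yields $(1-\gamma)A(\gamma)\le G(n)+\gamma(L-\delta_0)+(1-\gamma)\Delta G\le G(n)+\gamma(L-\delta_0/2)$ by the first $\gamma_0$-condition; this is the paper's \eqref{eq:a_gamma_bound}. To get the bound on $V_{0,\gamma}$ with no error terms, make two changes. First, restart at elapsed time $m$ rather than $m+1$: the $F$-cost at elapsed time $m$ is still paid in the restart period, so the cycle's $F$-costs are exactly $\nu P^jF$, $0\le j\le m$. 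Second, use $\gamma^j\ge\gamma^m$ rather than $\gamma_0^m$. The surcharge $\gamma^m\Delta G$ then cancels exactly against $\gamma^m\sum_{j\le m}(L-\nu P^jF)\ge\gamma^m(\Delta G+\eta)$, leaving $(1-\gamma)V_{0,\gamma}\le L+G(n)-\eta\gamma^m/(1+\gamma+\cdots+\gamma^m)\le L+G(n)-\delta_0$. No $(1-\gamma^{m+1})\Delta G$ terms arise, and you need no change of $K$, which would forfeit the ``Moreover'' clause.

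With these repairs your route is genuinely different from, and slightly leaner than, the paper's. The paper never lower-bounds the continuation value. It uses $V_\gamma(x,k)=(P^kF)(x)+A(\gamma)$ on the restart region to obtain the sandwich \eqref{eq:sandwich_final} at the threshold, which only applies when $1\le k^\star_\gamma(x)<\infty$. It must then exclude $k^\star_\gamma(x)=\infty$ separately via Lemma~\ref{Kostia2026Lem01}, and that is where the second $\gamma_0$-condition is used. Your direct check that $r$ is optimal at every $(x,k)$ with $k\ge K$ gives $k^\star_\gamma(x)\le K$, finiteness included, in one stroke, so Lemma~\ref{Kostia2026Lem01} is not needed. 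The second condition is in any case implied by the first, since $(1-\gamma_0^{m+1})\Delta G\le(m+1)(1-\gamma_0)\Delta G\le\eta\gamma_0^{m+1}/2$. The price of your route is the stopping-time lower bound of Step~2, which the sandwich argument sidesteps.
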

\medskip
\begin{proof}
Since $F$ is bounded under Assumption \ref{ass:UniGeo}, for each $\gamma\in(0,1)$, $V_\gamma$ is bounded. Until the end of this proof, we consider $\gamma\in(0,1)$ unless stated otherwise.
Let $a_\gamma:=(1-\gamma){A(\gamma)}$, where ${A(\gamma)}$ is defined in (\ref{eq:A_def}).

Consider the {deterministic} stationary policy ${\varphi}^{(m)}$ {defined for all $x\in {\bf X}$ and $k\in\mathbb{N}_0$ by $$\varphi^{(m)}(x,k):=\begin{cases}
n &\mbox{ if $0\le k\le m-1$},\\
r &\mbox{ if $k\ge m$,}
\end{cases}$$
where $m$ is as in Assumption \ref{ass:TG}.}
{Note that starting with any $(x,0)\in {\bf S}$, the policy $\varphi^{(m)}$} chooses $n$ at {$S_k=(Y_k,K_k)=(x,k)$} for $k=0,1,\dots,m-1$
and chooses $r$ at {$S_m=(x,m)$.  After $r$ is applied, the post-restart state variable $Y_{m+1}$ obeys the distribution $\nu$, the elapsed time $K_{m+1}$ is reset to $0$, and this pattern is repeated.} 
Now starting from a restart state distributed as $\nu$ at elapsed time $0$, 
one cycle lasts $m+1$ steps and incurs the expected discounted cost
\[
B_m(\gamma):=\sum_{j=0}^{m}\gamma^j\,\nu P^jF\;+\;\sum_{j=0}^{m-1}\gamma^j G(n)\;+\;\gamma^m G(r).
\]
Since the post-restart state is again distributed as $\nu$, the total expected {$\gamma$-discounted} cost from restart 
under ${\varphi}^{(m)}$ is equal to
\begin{equation}\label{eq:Jm_def}
J_m(\gamma){:= \int_{\bf X}  V_\gamma^{\varphi^{(m)}}(x,0)\nu(dx)} =\frac{B_m(\gamma)}{1-\gamma^{m+1}}.
\end{equation}
 {Note that  $V_{0,\gamma}=\int_{\bf X}  V_\gamma (x,0)\nu(dx) \le J_m(\gamma)$.}

Define weights
\[
w_j(\gamma):=\frac{(1-\gamma)\gamma^j}{1-\gamma^{m+1}},\qquad j=0,1,\dots,m,
\]
so that $\sum_{j=0}^{m} w_j(\gamma)=1$ and $w_j(\gamma)\ge w_m(\gamma)$ for all $j\le m$.
Then, multiplying \eqref{eq:Jm_def} by $(1-\gamma)$ and rearranging {yield}
\begin{equation}\label{eq:(1-g)Jm_expand}
(1-\gamma)J_m(\gamma)=\sum_{j=0}^m w_j(\gamma)\,\nu P^jF\;+\;G(n)\;+\;w_m(\gamma)\Delta G.
\end{equation}
{Define $d_t := L - \nu P^t F$ for all $t\ge 0$.}
Using \eqref{eq:PkF_le_L}, we have $d_j = L-\nu P^jF\ge 0$ for all $j\le m$, and hence
\[
\sum_{j=0}^m w_j(\gamma)\,\nu P^jF
= L-\sum_{j=0}^m w_j(\gamma)d_j
\le L-w_m(\gamma)\sum_{j=0}^m d_j.
\]
Moreover,
\[
w_m(\gamma)=\frac{(1-\gamma)\gamma^m}{1-\gamma^{m+1}}
=\frac{\gamma^m}{1+\gamma+\cdots+\gamma^m}\ \ge\ \frac{\gamma^m}{m+1}.
\]
Combining the last two inequalities with \eqref{eq:(1-g)Jm_expand} and \eqref{eq:A3_TG} yields
\[
(1-\gamma)J_m(\gamma)\ \le\ L+G(n)-\frac{\eta\,\gamma^m}{m+1}.
\]
Therefore, for any fixed $\gamma_0\in(0,1)$ and all $\gamma\in[\gamma_0,1)$,
\begin{equation}\label{eq:(1-g)V0_bound}
(1-\gamma)V_{0,\gamma}\ \le\ (1-\gamma)J_m(\gamma)\ \le\ L+G(n)-\delta_0,
\end{equation}
where $\delta_0 := \eta\,\gamma_0^{m}/(m+1)$. Using \eqref{eq:A_def} and \eqref{eq:(1-g)V0_bound},
\begin{align}
a_\gamma
&=(1-\gamma){A(\gamma)}=(1-\gamma)G(r)+\gamma(1-\gamma)V_{0,\gamma}\nonumber\\
&\le (1-\gamma)\big(G(n)+\Delta G\big)+\gamma\big(L+G(n)-\delta_0\big)\nonumber\\
&= G(n)+\gamma L-\gamma\delta_0 + (1-\gamma)\Delta G.\nonumber
\end{align}
Choose $\gamma_0\in(0,1)$ close enough to $1$ so that
\begin{align}\label{Kostia2026Eqn06}
(1-\gamma_0)\Delta G\ \le\ \frac{\gamma_0\,\delta_0}{2}
\end{align}
$\Big(\text{equivalently }\
(1-\gamma_0)\Delta G\le \frac{\eta\,\gamma_0^{m+1}}{2(m+1)}\Big)$
and 
\begin{align}\label{Kostia2026Eqn07}
 (1-\gamma_0^{m+1})\Delta G<\eta \gamma_0^{m+1}.
\end{align}
(The above two inequalities are the same as the two at the end of the statement of this theorem.)

Then {from (\ref{Kostia2026Eqn06}),} for all $\gamma\in[\gamma_0,1)$ we have $(1-\gamma)\Delta G\le \gamma\delta_0/2$, and
\begin{equation}\label{eq:a_gamma_bound}
a_\gamma\ \le\ G(n)+\gamma\Big(L-\frac{\delta_0}{2}\Big),
\quad \forall \gamma\in[\gamma_0,1).
\end{equation}

\medskip
Let $K$ be as in \eqref{eq:delta0_def}, so that by \eqref{eq:A2_geo},
\begin{equation}\label{eq:PK_lower}
(P^K F)(x)\ \ge\ L-C\rho^K\ \ge\ L-\frac{\delta_0}{4},
\quad \forall x\in {{\bf X}}.
\end{equation}
Combining \eqref{eq:a_gamma_bound}--\eqref{eq:PK_lower} yields, for all $\gamma\in[\gamma_0,1)$ and all $x\in {{\bf X}}$,
\[
a_\gamma\ \le\ G(n)+\gamma\Big(L-\frac{\delta_0}{2}\Big)
\ <\ G(n)+\gamma\Big(L-\frac{\delta_0}{4}\Big)\
\]
\begin{equation}\label{eq:a_vs_PK}
\le\ G(n)+\gamma(P^K F)(x).
\end{equation}

{Up to the end of this proof,} fix $\gamma\in[{\gamma_0},1)$ and $x\in {{\bf X}}$. If $k^\star_\gamma(x)=0$, then {$k^\star_\gamma(x)<\bar{K}=K+1$} follows {automatically}.

Next, suppose $1 \le k^\star_\gamma(x) < \infty$.
By the definition of $k^\star_\gamma(x)$ (see (\ref{eq:kstar_def})), {the} action $r$ is {$\gamma$-discounted} optimal at $(x,k^\star_\gamma(x))$ and {the} action $n$ is {$\gamma$-discounted} optimal at $(x,k^\star_\gamma(x)-1)$.
Using \eqref{eq:Bellman_xk} at these two states gives
\begin{align}
&{A(\gamma)} \le G(n)+\gamma V_\gamma\big(x,k^\star_\gamma(x)+1\big), \label{eq:sandwich1}\\
&G(n)+\gamma V_\gamma\big(x,k^\star_\gamma(x)\big) \le {A(\gamma)}. \label{eq:sandwich2}
\end{align}
Since {by Theorem \ref{thm:elapsed_time_threshold}}, $r$ is {$\gamma$-discounted} optimal at all $(x,k)$ with $k\ge k^\star_\gamma(x)$, the Bellman equation {(\ref{eq:Bellman_xk_simplified})} yields
\[
V_\gamma(x,k)=(P^kF)(x)+{A(\gamma)},\qquad k\ge k^\star_\gamma(x).
\]
Substituting this identity into \eqref{eq:sandwich1}--\eqref{eq:sandwich2} gives
\begin{equation}\label{eq:sandwich_final}
G(n)+\gamma(P^{k^\star_\gamma(x)}F)(x)\ \le\ a_\gamma\ \le\ G(n)+\gamma(P^{k^\star_\gamma(x)+1}F)(x).
\end{equation}

{Suppose for contradiction that} $k^\star_\gamma(x)> K$ {with $K$ being given by (\ref{eq:delta0_def})}. Then by the monotonicity (Assumption~\ref{ass:PFgeF}) we have
$(P^{k^\star_\gamma(x)}F)(x)\ge (P^K F)(x)$, and hence, {by the left inequality in \eqref{eq:sandwich_final},}
$G(n)+\gamma(P^K F)(x)\le a_\gamma$, contradicting \eqref{eq:a_vs_PK}.
Therefore, $k^\star_\gamma(x) \le K{<\bar{K}}$, as desired.

{Finally,} let us show that $k^\star_\gamma(x) < \infty$.   
Suppose {for contradiction that}  $k^\star_\gamma(x) = \infty$. {By Lemma \ref{Kostia2026Lem01},} 
the action $n$ is {$\gamma$-discounted} optimal at $({y},k)$ for all {$k\ge 0$ and $y\in{\bf X}$, and $V_\gamma^{\varphi^n}=V_\gamma$, where $\varphi^n$ is the always-$n$ policy.}
Substituting the explicit expression of {$V^{\varphi^n}_\gamma$}, i.e.,
\[
V_\gamma^{{\varphi^n}}({y},k) \; = \; \sum_{t=0}^{\infty}\gamma^{t} (P^{k+t}F)({y}) + \frac{G(n)}{1-\gamma}
\]
into the Bellman equation
{\begin{align*}
V_\gamma^{\varphi^n}(y,k)=&(P^kF)(y)+\min\Big\{G(n)+\gamma V^{\varphi^n}(y,k+1), \\
&G(r)+\gamma \int_{{\bf X}} V^{\varphi^n}(z,0)\nu(dz)  \Big\}
\end{align*}}
and canceling the common terms 
$\gamma G(n)/(1-\gamma)$ implies:
\begin{equation}\label{eq:always_n_condition}
\gamma \sum_{t=0}^{\infty}\gamma^{t}\Big((P^{k+1+t}F)({y})-\nu P^{t}F\Big) \le \Delta G, \
\forall k \in {\mathbb N}_0, y\in {\bf X},
\end{equation}
where $\nu P^{t}F = \int_X (P^{t}F)(z)\,\nu(dz)$. By the dominated convergence theorem,
when $k \to \infty$, (\ref{eq:always_n_condition}) becomes
$$
\gamma \sum_{t=0}^{\infty}\gamma^{t}\Big(\,L\;-\;\nu P^t F\,\Big) \le \Delta G.
$$
{Recall that $d_t = L - \nu P^t F\ge 0$ for all $t\ge 0$}. Then, by  Assumption~\ref{ass:TG},
we can write a string of inequalities
$$
\Delta G \ge \gamma \sum_{t=0}^{\infty}\gamma^{t} d_t 
\ge \gamma^{m+1} \sum_{t=0}^{m} d_t \ge \gamma^{m+1} (\Delta G + \eta),
$$ 
{and hence  $\Delta G (1-\gamma^{m+1})\ge \gamma^{m+1}\eta.$  
Since $\gamma\in [\gamma_0,1)$, this leads to a contradiction  against (\ref{Kostia2026Eqn07}).}
Thus, 
$k^\star_\gamma(x) < \infty$. 

{Since $x\in {\bf X}$ and $\gamma\in[\gamma_0,1)$ were arbitrarily fixed,} from the above considerations 
$$
\sup_{x\in {\bf X},\gamma\in[\gamma_0,1)} k^\star_\gamma(x) \le K < {K+1=\bar{K}}
$$ 
and \eqref{eq:uniform_bound_conclusion} holds.
\end{proof}
\medskip
\begin{remark}
If $\Delta G>0$, a uniform bound over all $\gamma\in(0,1)$ generally cannot hold:
for sufficiently small $\gamma$ the future is heavily discounted and paying $\Delta G$
can make restarting suboptimal everywhere (so in that case $k^\star_\gamma(x)=\infty$).
\end{remark}
\medskip


\subsection{Uniform {boundedness} of the relative value {function} and optimality results}

{In this subsection, we show under Assumptions \ref{ass:PFgeF},~\ref{ass:UniGeo} and \ref{ass:TG} that there is an average optimal deterministic stationary policy with the threshold structure. We will use the vanishing discount approach established in \cite{Schal1993}. To this end,}  let us establish the {boundedness} of the relative value function. {Throughout this subsection, unless stated otherwise, $\gamma\in(0,1)$.}
\medskip
\begin{theorem}
\label{thm:bound_relative_value}
Let {Assumption \ref{ass:PFgeF} hold.}
Assume further that {the $[0,\infty]$-valued function $F$ is bounded,} and there exist $\gamma_0\in(0,1)$ and a {positive} integer ${\bar{K}}<\infty$ such that
\begin{equation}
\label{eq:unifK}
\sup_{x\in {\bf X}} k^\star_\gamma(x) <{\bar{K}},
\qquad \forall \gamma\in [\gamma_0,1),
\end{equation}
where $k^\star_\gamma(\cdot)$ is the optimal restart threshold defined in (\ref{eq:kstar_def}). 
(Under {Assumptions \ref{ass:PFgeF}, \ref{ass:UniGeo} and \ref{ass:TG}}, {$F$ is boundeded, and moreover,} such $\gamma_0,{\bar{K}}$ exist by 
Theorem~\ref{thm:uni_bound_threshold}.) Define 
\[
m_\gamma \;:=\; \inf_{(x,k)\in {\bf X}\times\mathbb N_0} V_\gamma(x,k),
\]
{where $V_\gamma$ is the value function for the $\gamma$-discounted criterion}.
Then
\[
\sup_{\gamma\in(0,1)}\ \sup_{(x,k)\in {\bf X}\times \mathbb N_0}\ \bigl(V_\gamma(x,k)-m_\gamma\bigr)
\;<\;\infty.
\]
Specifically, with
\[
\bar c \;:=\; \|F\|_\infty + \max\bigl\{|G(n)|,\ |G(r)|\bigr\},
\]
{where for any bounded function, $\|\cdot\|_\infty$ denotes its supremum norm,}
one has the explicit uniform bound
\begin{equation}
\label{eq:explicit_bound_inf}
\sup_{\gamma\in(0,1)}\ \sup_{(x,k)\in {\bf X}\times \mathbb N_0}\ \bigl(V_\gamma(x,k)-m_\gamma\bigr)
\le
\max\left\{\frac{2\bar c}{1-\gamma_0},\ 4 {\bar{K}} \bar c\right\}.
\end{equation}
\end{theorem}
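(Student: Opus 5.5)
The plan is to split into two regimes of the discount factor: $\gamma\in(0,\gamma_0)$, where the crude bound $V_\gamma\le \bar c/(1-\gamma)$ is already uniform, and $\gamma\in[\gamma_0,1)$, where the uniform threshold bound \eqref{eq:unifK} lets every state reach a restart within at most $\bar K$ steps. Throughout, recall that $F\ge 0$ and $G(a)\ge 0$, so $c\ge 0$ and $c\le \bar c$ (because $0\le (P^kF)(x)\le \|F\|_\infty$). Hence
\[
0\le V_\gamma(x,k)\le \frac{\bar c}{1-\gamma}\quad\text{and}\quad 0\le m_\gamma\le V_\gamma .
\]

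\emph{Small $\gamma$.} For $\gamma\in(0,\gamma_0)$ I use the crude bound directly:
\[
V_\gamma(x,k)-m_\gamma\le V_\gamma(x,k)\le \frac{\bar c}{1-\gamma}\le \frac{\bar c}{1-\gamma_0}\le \frac{2\bar c}{1-\gamma_0}.
\]
This gives the first term in the maximum in \eqref{eq:explicit_bound_inf}.

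\emph{Large $\gamma$: trajectory under the threshold policy.} Fix $\gamma\in[\gamma_0,1)$ and use the optimal threshold policy $\varphi^\star_\gamma$ of Theorem~\ref{thm:elapsed_time_threshold}. From $(x,k)$ its trajectory is deterministic until the first restart. It plays $n$ for $\tau:=\max\{0,k^\star_\gamma(x)-k\}$ steps, and $\tau\le \bar K-1$ by \eqref{eq:unifK}. It then plays $r$ at $(x,k+\tau)$. Since $\varphi^\star_\gamma$ selects optimal actions at every state, unrolling the Bellman equation \eqref{eq:Bellman_xk_simplified} along this path gives the exact identity
\[
V_\gamma(x,k)=\sum_{t=0}^{\tau-1}\gamma^t\big((P^{k+t}F)(x)+G(n)\big)+\gamma^\tau\big((P^{k+\tau}F)(x)+A(\gamma)\big),
\]
where $A(\gamma)=G(r)+\gamma V_{0,\gamma}$.

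\emph{Sandwich around $V_{0,\gamma}$.} Bounding each stage cost by $\bar c$, and using $\gamma^\tau\gamma V_{0,\gamma}\le V_{0,\gamma}$, gives the upper bound
\[
V_\gamma(x,k)\le (\tau+1)\bar c+V_{0,\gamma}\le (\bar K+1)\bar c+V_{0,\gamma}.
\]
Dropping all nonnegative terms except $\gamma^{\tau+1}V_{0,\gamma}$, and using $\tau+1\le\bar K$, gives the lower bound
\[
V_\gamma(x,k)\ge \gamma^{\bar K}V_{0,\gamma}.
\]
The key observation is that the gap $(1-\gamma^{\bar K})V_{0,\gamma}$ is bounded independently of $\gamma$. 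Indeed,
\[
1-\gamma^{\bar K}=(1-\gamma)(1+\gamma+\cdots+\gamma^{\bar K-1})\le \bar K(1-\gamma),
\]
and $(1-\gamma)V_{0,\gamma}\le \bar c$. Hence
\[
\inf_{(x,k)}V_\gamma(x,k)\ge V_{0,\gamma}-\bar K\bar c.
\]

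\emph{Conclusion for large $\gamma$.} Subtracting the lower bound from the upper bound yields
\[
\sup_{(x,k)}V_\gamma(x,k)-m_\gamma\le (\bar K+1)\bar c+\bar K\bar c=(2\bar K+1)\bar c\le 4\bar K\bar c,
\]
where the last step uses $\bar K\ge 1$. Combining the two regimes gives \eqref{eq:explicit_bound_inf}.

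The only delicate point is justifying the exact trajectory representation of $V_\gamma(x,k)$. I would obtain it by iterating \eqref{eq:Bellman_xk_simplified} finitely many times, at most $\bar K$ of them, rather than appealing to policy evaluation. This works because at each visited state the minimum is attained by the action chosen by $\varphi^\star_\gamma$, including at ties, as guaranteed by the construction in Theorem~\ref{thm:elapsed_time_threshold}. Since $V_\gamma$ is finite (indeed bounded) for $\gamma<1$, no $\infty-\infty$ issues arise.
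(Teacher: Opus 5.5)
Your proof is correct and follows essentially the same route as the paper. Both split at $\gamma_0$ and use the crude bound $\bar c/(1-\gamma)$ for small $\gamma$. For $\gamma\ge\gamma_0$, both unroll the Bellman equation along at most $\bar K-1$ no-intervention steps to compare $V_\gamma(x,k)$ with $V_{0,\gamma}$ via $1-\gamma^{\tau+1}\le(\tau+1)(1-\gamma)$. The only difference is that you use nonnegativity of the costs (and $m_\gamma\ge 0$) to obtain one-sided bounds, which gives the slightly sharper constants $\bar c/(1-\gamma_0)$ and $(2\bar K+1)\bar c$ instead of the paper's two-sided estimate $|V_\gamma-V_{0,\gamma}|\le 2\bar K\bar c$.
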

\medskip
\begin{proof}
Fix $\gamma\in(0,1)$. Since $F$ is bounded, we have
$|(P^kF)(x)|\le \|F\|_\infty$ for all $x,k$, and  
${0\le}(P^kF)(x)+G(a){\le \bar c}$ {for all $x,k$ and $a$}.
Hence, for any policy $\pi$ and any initial state $(x,k)$,
\[
{V_\gamma^\pi(x,k)=} \EE^\pi_{x,k}\!\left[\sum_{t\ge 0}\gamma^t\bigl((P^{K_t}F)({Y}_t)+G({A}_t)\bigr)\right]
\]
\[
\le \sum_{t\ge 0}\gamma^t\,\bar c \;=\; \frac{\bar c}{1-\gamma},
\]
and therefore,
\begin{equation}
\label{eq:crudeV}
\|V_\gamma\|_\infty \le \frac{\bar c}{1-\gamma},
\
\spn(V_\gamma):=\sup_{s{\in {\bf S}}} V_\gamma(s)-\inf_{s{\in {\bf S}}} V_\gamma(s)\le \frac{2\bar c}{1-\gamma}.
\end{equation}

\smallskip
\noindent\emph{Case 1: $\gamma\in(0,\gamma_0]$.}
By \eqref{eq:crudeV}, $\spn(V_\gamma)\le 2\bar c/(1-\gamma)\le 2\bar c/(1-\gamma_0)$.
Since $V_\gamma(s)-m_\gamma\le \spn(V_\gamma)$ for all $s{=(x,k)\in {\bf S}}$, we obtain
\[
\sup_{s{\in {\bf S}}}\bigl(V_\gamma(s)-m_\gamma\bigr)\le \frac{2\bar c}{1-\gamma_0},
\qquad \gamma\in(0,\gamma_0].
\]

\smallskip
\noindent\emph{Case 2: $\gamma\in[\gamma_0,1)$.}
Let $V_{0,\gamma}$ and $A{(\gamma)}=G(r)+\gamma V_{0,\gamma}$ be as in (\ref{eq:A_def}). 
Fix $(x,k)\in {{\bf S}=}{\bf X}\times \mathbb N_0$ and define
\[
\tau := \bigl(k^\star_\gamma(x)-k\bigr)_+{:=\max\{0,k^\star_\gamma(x)-k\}} \in\{0,1,\dots,{\bar{K}}-1\}.
\]
By repeated use of the Bellman equation along the deterministic $n$-transitions,
and using that {the action} $r$ is {$\gamma$-discounted} optimal at $(x,k^\star_\gamma(x))$ while {the action} $n$ is {$\gamma$-discounted} optimal for all
$(x,j)$ with $j<k^\star_\gamma(x)$ (this is exactly the meaning of the threshold 
index (\ref{eq:kstar_def})), we obtain the identity
\begin{eqnarray*}
V_\gamma(x,k) & = &
\sum_{t=0}^{\tau-1}\gamma^t\Bigl((P^{k+t}F)(x)+G(n)\Bigr) \\
& & +\gamma^\tau\Bigl((P^{k+\tau}F)(x)+{A(\gamma)} \Bigr).
\end{eqnarray*}
Substituting ${A(\gamma)}=G(r)+\gamma V_{0,\gamma}$ into the above equation and subtracting $V_{0,\gamma}$
from both sides yields
\[
V_\gamma(x,k)-V_{0,\gamma} =
\sum_{t=0}^{\tau-1}\gamma^t\Bigl((P^{k+t}F)(x)+G(n)\Bigr)
\]
\[
+\gamma^\tau\Bigl((P^{k+\tau}F)(x)+G(r)\Bigr)
+\bigl(\gamma^{\tau+1}-1\bigr)V_{0,\gamma}.
\]
Taking absolute values and using ${0\le (P^{l}F)(x)+G(a)}\le \bar c$ {for all $x,a$ and $l\in\mathbb{N}_0$} gives
\[
|V_{{\gamma}}(x,k)-V_{0,\gamma}|
\le \frac{1-\gamma^{\tau+1}}{1-\gamma} \bar c + (1-\gamma^{\tau+1})|V_{0,\gamma}|.
\]
By \eqref{eq:crudeV}, $|V_{0,\gamma}|\le \|{V_\gamma}\|_\infty\le \bar c/(1-\gamma)$. Since
$1-\gamma^{\tau+1}\le (\tau+1)(1-\gamma)$, we have
\[
|{V_\gamma}(x,k)-V_{0,\gamma}|
\le (\tau+1)\bar c + (\tau+1)\bar c
= 2(\tau+1)\bar c
\le 2 {\bar{K}} \bar c.
\]
Taking the supremum over $s=(x,k){\in {\bf S}}$ yields
\begin{equation}
\label{eq:unif_to_V0}
\sup_{s{\in {\bf S}}}|{V_\gamma}(s)-V_{0,\gamma}| \le 2 {\bar{K}} \bar c,
\qquad \gamma\in[\gamma_0,1).
\end{equation}
Consequently,
\[
\spn({V_\gamma})
=\sup_{s\in {\bf S}} {V_\gamma}(s)-\inf_{s\in {\bf S}} {V_\gamma}(s) \le 
\]
\[
\bigl(V_{0,\gamma} + 2{\bar{K}}\bar c\bigr)-\bigl(V_{0,\gamma} - 2{\bar{K}}\bar c\bigr)
=4{\bar{K}}\bar c.
\]
Since ${V_\gamma}(s)-m_\gamma\le \spn({V_\gamma})$ for all $s\in {\bf S}$, we obtain
\[
\sup_{s\in {\bf S}}\bigl({V_\gamma}(s)-m_\gamma\bigr)\le 4 {\bar{K}} \bar c,
\qquad \gamma\in[\gamma_0,1).
\]
Combining the bounds from Cases 1 and 2 proves \eqref{eq:explicit_bound_inf}.
\end{proof}
\medskip
Next, we can extend the established {threshold} structure of the optimal policy from
the {$\gamma$-discounted cost} criterion to the long-run average {cost} criterion.
\medskip
\begin{theorem}
\label{thm:average_opt}
Let Assumptions \ref{ass:PFgeF},~\ref{ass:UniGeo} and \ref{ass:TG} hold. Then the MDP model $\{{\bf S},{\bf A},Q,c\}$ admits an average optimal deterministic stationary policy $\varphi$ that has the threshold structure, i.e., for each $x\in {\bf X}$, there is some $k^{\star}(x)\in\{0,1,\dots,\bar{K}-1\}$ such that 
\begin{align*}
\varphi(x,k)=\begin{cases}
r & \mbox{ if $k\ge k^{\star}(x)$;}\\
n & \mbox{ if $k<k^{\star}(x)$}.
\end{cases}
\end{align*}


\end{theorem}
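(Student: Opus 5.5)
The plan is a vanishing discount argument in the spirit of \cite{Schal1993}, made elementary by the threshold structure. Set $h_\gamma(x,k):=V_\gamma(x,k)-V_{0,\gamma}$. By (\ref{eq:unif_to_V0}) in the proof of Theorem~\ref{thm:bound_relative_value}, $\sup_{s}|h_\gamma(s)|\le 2\bar K\bar c$ for all $\gamma\in[\gamma_0,1)$. Also $(1-\gamma)V_{0,\gamma}\in[0,\bar c]$ by (\ref{eq:crudeV}). Theorem~\ref{thm:uni_bound_threshold} gives $k^\star_\gamma(x)\in\{0,\dots,\bar K-1\}$ for these $\gamma$.

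\emph{Step 1: subsequence.} Choose $\gamma_j\uparrow1$ with $(1-\gamma_j)V_{0,\gamma_j}\to g\in[0,\bar c]$.

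\emph{Step 2: limiting threshold.} The threshold takes only finitely many values, but it depends on $x$ in an uncountable space, so we cannot simply extract a pointwise convergent subsequence. Instead we work with $\ell_\gamma:=(1-\gamma)A(\gamma)$ (denoted $a_\gamma$ earlier), which is bounded. Pass to a further subsequence along which $\ell_{\gamma_j}\to\ell$, and note $\ell=g$ since $(1-\gamma)G(r)\to0$.

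The sandwich (\ref{eq:sandwich_final}) says the following. Either $k^\star_\gamma(x)=0$, or
\[
G(n)+\gamma(P^{k^\star_\gamma}F)(x)\le\ell_\gamma\le G(n)+\gamma(P^{k^\star_\gamma+1}F)(x).
\]
More generally, by Theorem~\ref{thm:elapsed_time_threshold} the rule is: restart at the first $k$ with $\ell_\gamma\le G(n)+\gamma(P^{k+1}F)(x)$. This uses $V_\gamma(x,k+1)=(P^{k+1}F)(x)+A(\gamma)$ beyond the threshold, which gives $(1-\gamma)A(\gamma)\le G(n)+\gamma P^{k+1}F$.

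This suggests the limiting rule
\[
k^\star(x):=\min\{k\ge0:\ g\le G(n)+(P^{k+1}F)(x)\}.
\]
It is measurable in $x$ because each $P^{k+1}F$ is measurable. We need $k^\star(x)\le\bar K-1$. By monotonicity of $P^kF$ it suffices that $g\le G(n)+(P^{\bar K}F)(x)$, and this follows by passing to the limit in (\ref{eq:a_vs_PK}), whose left side is $\ell_\gamma$ and whose right side tends to $G(n)+(P^KF)(x)$ with $K<\bar K$.

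\emph{Step 3: average optimality.} Rather than verifying an ACOE through limits (a routine but delicate route), we can argue directly. For any policy $\pi$ and $\gamma\in[\gamma_0,1)$,
\[
(1-\gamma)V^\pi_\gamma\ge(1-\gamma)V_\gamma=(1-\gamma)V_{0,\gamma}+(1-\gamma)h_\gamma.
\]
A Tauberian inequality gives $J^\pi(s)\ge\limsup_{\gamma\uparrow1}(1-\gamma)V^\pi_\gamma(s)$. Along $\gamma_j$ the right side tends to $g$, so $J^\pi\ge g$ everywhere.

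It remains to show $J^{\varphi}=g$ for the threshold policy $\varphi$ defined by $k^\star$. Under $\varphi$ the process is regenerative at restarts: cycles start from $\nu$ and have length $k^\star(Y)+1\le\bar K$, with bounded costs. Renewal reward then gives
\[
J^{\varphi}(s)=\frac{\int\big(\sum_{j\le k^\star(x)}(P^jF)(x)+k^\star(x)G(n)+G(r)\big)\nu(dx)}{\int(k^\star(x)+1)\nu(dx)}=:\rho_\varphi,
\]
independent of $s$. We show $\rho_\varphi\le g$ by checking the ACOE-type inequality. Define
\[
h(x,k):=\sum_{t=k}^{k^\star(x)\vee k}\big((P^tF)(x)+G(n)-g\big)+\Delta G\quad\text{for }k\le k^\star(x),
\]
together with the analogous formula beyond the threshold. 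Then verify that $g+h\ge c+Qh$ holds under $\varphi$; summing over a cycle yields $\rho_\varphi\le g$. The defining property of $k^\star$, namely $P^kF\le g-G(n)$ before the threshold, is what makes this inequality hold.

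\textbf{Main obstacle.} I expect Step 3 to be hardest, specifically identifying the value of $\nu$-averaged $h$ (the renewal constant) consistently with $g$. If the direct verification becomes messy, the fallback is Schäl's theorem \cite{Schal1993}. Its hypotheses are: finite action set, hence compactness and continuity trivially; bounded cost; and uniformly bounded $h_\gamma$, supplied by Theorem~\ref{thm:bound_relative_value}. It yields an average optimal stationary policy $\psi$ whose actions at each $s$ are limit points of $\gamma_j$-optimal actions. Choosing, at each $x$, the smallest $k$ with a limit restart decision, together with the uniform bound $k^\star_{\gamma_j}<\bar K$, preserves the tail-set structure and gives the required threshold with values in $\{0,\dots,\bar K-1\}$.
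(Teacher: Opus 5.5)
\textbf{There is a genuine gap, in Step 3.}

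With your $h$, the inequality $g+h\ge c+Qh$ holds with equality at every $(x,k)$ with $k<k^\star(x)$. At a restart state ($k\ge k^\star(x)$), both $g+h(x,k)$ and $c(x,k;r)$ equal $(P^kF)(x)+G(r)$, so the inequality reduces to $\int h(z,0)\,\nu(dz)\le 0$. But
\[
\int h(z,0)\,\nu(dz)=\int\Bigl(\sum_{t=0}^{k^\star(z)}\bigl((P^tF)(z)+G(n)\bigr)+\Delta G-g\bigl(k^\star(z)+1\bigr)\Bigr)\nu(dz),
\]
and this is $\le 0$ exactly when $\rho_\varphi\le g$. So the ``verification'' is the claim itself.

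The defining property of $k^\star$ cannot supply it. It says at most that the summands $(P^tF)(z)+G(n)-g$, $t\le k^\star(z)$, are negative, and it never involves $\Delta G$. The same construction with any constant $\lambda$ in place of $g$ has the same property, yet $\rho_\varphi\le\lambda$ must fail for every $\lambda$ below the optimal average cost. What is missing is an achievability link between $g$ and the cost of actual policies. Your Tauberian step only gives $J^\pi\ge g$.

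\textbf{How to close it.} Compare with the discounted thresholds $\kappa_j:=k^\star_{\gamma_j}\le\bar K-1$.
\begin{itemize}
\item Unroll \eqref{eq:Bellman_xk_simplified} along the ray from $(z,0)$ and integrate against $\nu$. This gives $(1-\gamma_j)V_{0,\gamma_j}=\int\sum_{t\le\kappa_j(z)}\gamma_j^tc_t(z)\,\nu(dz)\big/\int\sum_{t\le\kappa_j(z)}\gamma_j^t\,\nu(dz)$, where $c_t(z)=(P^tF)(z)+G(n)$ for $t<\kappa_j(z)$ and $c_{\kappa_j(z)}(z)=(P^{\kappa_j(z)}F)(z)+G(r)$.
\item Since $c_t\ge0$ and $\kappa_j<\bar K$, this ratio lies within factors $\gamma_j^{\pm\bar K}$ of the undiscounted renewal ratio $\rho(\kappa_j)$ (your formula for $\rho_\varphi$ with $k^\star$ replaced by $\kappa_j$). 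Hence $\rho(\kappa_j)\to g$.
\item By Lemma~\ref{lem:PkF_increasing}, the increments $(P^{\kappa+1}F)(z)+G(n)-g$ are nondecreasing in $\kappa$. So $k^\star(z)$ minimizes $\kappa\mapsto\sum_{t=0}^{\kappa}((P^tF)(z)+G(n)-g)$ for each $z$.
\item Therefore $\int h(z,0)\,\nu(dz)\le\int(\kappa_j(z)+1)\,\nu(dz)\,(\rho(\kappa_j)-g)\to0$, which is what you need.
\end{itemize}
With this inserted, your route is a complete proof and genuinely different from the paper's. It avoids Sch\"al's theorem and yields one explicit measurable threshold $k^\star(x)=\min\{k:g\le G(n)+(P^{k+1}F)(x)\}$ for all $x$ simultaneously, with $g$ identified as the optimal average cost.

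\textbf{On your fallback.} It is essentially the paper's proof: Sch\"al's Theorem~3.8, with (B) from Theorem~\ref{thm:bound_relative_value} and (S) from finiteness of $\mathbf A$, followed by a subsequence on which $k^\star_{\gamma_l}(x)$ is eventually constant. Your concern about $x$-dependent subsequences is well founded, and your modification is a sensible cure. However, you must justify that the modification preserves average optimality. From any $(z,0)$, Sch\"al's policy $\psi$ passes through $(z,0),(z,1),\dots$ up to its first restart. Hence $\psi$ and the modified policy coincide on every post-restart trajectory. Both also restart within $\bar K$ steps from any state, because $r$ is the only limit action at $k\ge\bar K-1$. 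Together these give equal average costs.
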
 
\medskip
\begin{proof}
  We consider for each $\gamma\in (0,1)$ the $\gamma$-discounted optimal deterministic stationary policy $\varphi_\gamma^\star$ and the threshold index $k_\gamma^\star$ as in Theorem \ref{thm:elapsed_time_threshold}. 
By Theorem \ref{thm:bound_relative_value}, \cite[Assumption (B)]{Schal1993} is satisfied. Moreover, \cite[General Assumption]{Schal1993} holds because $F$ is bounded under  Assumption \ref{ass:UniGeo}. Finally, \cite[Assumption (S)]{Schal1993} holds because ${\bf A}=\{n,r\}$ is finite. Consequently,  \cite[Theorem 3.8]{Schal1993} is applicable. 

 Fix $x\in {\bf X}$. According to  \cite[Theorem 3.8]{Schal1993}, with a diagonal argument, there is an average optimal deterministic stationary policy $\varphi$ such that $\varphi(x,k)=\lim_{l\rightarrow\infty}\varphi_{\gamma_l}^\star(x,k)$ for all $k\in\mathbb{N}_0$ for a sequence $\{\gamma_l\}_{l=1}^\infty\subseteq(0,1)$ converging to $1$. 
By Theorem \ref{thm:uni_bound_threshold}, for some positive integer $\bar{K}$, $k_{\gamma_l}^\star(x)\in \{0,\dots,\bar{K}-1\}$ for all large enough $l$. By taking a subsequence if necessary, one can assume without loss of generality that $\{k_{\gamma_l}^\star(x)\}_{l=1}^\infty$ converges to some $k^\star(x)\in \{0,\dots,\bar{K}-1\}$, and moreover, for some positive integer $l_0$, 
$
k_{\gamma_l}^\star(x)=k_{\gamma_{l_0}}^\star(x)=k^\star(x)$ for all $l\ge l_0$.

Now for all $l\ge l_0$, by (\ref{Kostia2026Eqn08}), we see for each $k\in\mathbb{N}_0$ that $$\varphi_{\gamma_l}^{\star}(x,k)=\begin{cases}
r &\mbox{ if $k\ge k_{\gamma_l}^\star(x)=k^\star_{\gamma_{l_0}}(x)=k^\star(x)$;}\\
n &\mbox{ if $k< k_{\gamma_l}^\star(x)=k_{\gamma_{l_0}}^\star(x)=k^\star(x)$.}
\end{cases}$$

Therefore, for each $k\in\mathbb{N}_0,$ $\varphi(x,k)=\lim_{l\rightarrow \infty}\varphi_{\gamma_l}^\star(x,k)=\varphi_{\gamma_{l_0}}^\star(x,k).$ Thus,  (\ref{Kostia2026Eqn11}) holds for $k^\star(x)=k_{\gamma_{l_0}}^\star(x)$ according to Theorem \ref{thm:elapsed_time_threshold}. 
\end{proof}

\end{document}